\newtheorem{thm}{Theorem}[section]
\newtheorem{cor}[thm]{Corollary}
\newtheorem{lem}[thm]{Lemma}
\newtheorem{prop}[thm]{Proposition}
\theoremstyle{definition}
\newtheorem{defn}[thm]{Definition}
\theoremstyle{remark}
\numberwithin{equation}{section}
\newtheorem{exa}[thm]{Example}
\newcommand{\h}{\mathcal{H}}
\newcommand{\K}{\mathcal{K}}
\begin{document}
\title[Disjointness of continuous g-frames]{Disjointness of continuous g-frames and Riesz-type continuous g-frames }%
\author {Y. khedmati and M. R. Abdollahpour$^*$ }%
\address{
\newline
\indent Yavar Khedmati and Mohammad Reza Abdollahpour
\newline
\indent Department of Mathematics
\newline
\indent Faculty of Sciences
\newline \indent   University of Mohaghegh Ardabili
\newline \indent  Ardabil 56199-11367
\newline \indent Iran}
\email{khedmati.y@uma.ac.ir, khedmatiy.y@gmail.com}
\email{m.abdollah@uma.ac.ir, mrabdollahpour@yahoo.com}

\thanks{{\scriptsize
\hskip -0.4 true cm MSC(2010): Primary 41A58, 42C15.
%; Secondary 41A58, 42C15.
\newline Keywords: continuous frame,
continuous $g$-frame, Riesz-type continuous
$g$-frame. \\ $^*$Corresponding author
%\newline\indent{\scriptsize }
}}

%\subjclass[2000]{Primary 41A58, 42C15} \keywords{continuous frame,
%continuous $g$-frame, Riesz-type continuous
%$g$-frame.}
%\thanks{$^*$Corresponding author}
%\date{}%
%\dedicatory{}%
%\commby{}%
% ----------------------------------------------------------------
\begin{abstract}
In this paper we introduce concepts of disjoint, strongly disjoint and weakly disjoint continuous $g$-frames in Hilbert spaces and we get some equivalent conditions to these notions. We also construct a continuous g-frame by disjoint continuous g-frames. Furthermore, we provide some results related to the Riesz-type continuous $g$-frames.
\end{abstract}
\maketitle
% ----------------------------------------------------------------
\section{{\textbf Introduction}}
In 1952, the concept of frames for Hilbert spaces was defined by Duffin and Schaeffer \cite{DS}. Frames are important tools in the signal processing, image processing, data compression, etc. Let $\h$ be a separable Hilbert space. We call a sequence $F=\{f_i\}_{i\in I} \subseteq \h$ a frame for $\h$ if there exist two constant $A_F, B_F> 0$ such that
\begin{eqnarray}\label{abc}
A_F \|f\|^2\leq\sum_{i\in I}|\langle f,f_i\rangle |^2 \leq B_F \|f\|^2,\quad f\in \h.
\end{eqnarray}
If in (\ref{abc}), $A_F=B_F=1$ we say that $F=\{f_i\}_{i\in I}$ is a Parseval frame for $\h$. Let $F=\{f_i\}_{i\in I}$ be a frame for $\h,$ then the operator
\begin{eqnarray*}
T_F:l_2(I) \rightarrow \h,\quad T_F(\{c_i\}_{i\in I})=\sum_{i\in I} c_if_i,
\end{eqnarray*}
is well define and onto, also its adjoint is
\begin{eqnarray*}
T^*_F:\h\rightarrow l_2(I) ,\quad T_F^* f=\{\langle f,f_i \rangle\}_{i\in I}.
\end{eqnarray*}
The operators $T_F$ and $T^*_F$ are called the synthesis and analysis operators of frame $F.$
The concepts of disjoint frames and strongly disjoint frames introduced by Han and Larson \cite{HanL}.
\begin{defn}
Let $F=\{f_i\}_{i\in I}$ and $G=\{g_i\}_{i\in I}$ be frames for Hilbert spaces $\h$ and $\K,$ respectively. We say that $F$ and $G$ are
\begin{enumerate}
\item[(i)]
Disjoint, if $\{f_i \oplus g_i\}_{i\in I}$ is a frame for $\h\oplus \K$.
\item[(ii)]
Strongly disjoint, if there are invertible operator $L_1 \in B(\h)$ and $L_2 \in B(\K)$ such that $\{L_1 f_i\}_{i\in I},\{L_2 g_i\}_{i\in I}$ and $\{L_1 f_i\oplus L_2g_i\}_{i\in I}$ are respective Parseval frames for $\h,\K$ and $\h \oplus \K$.
\end{enumerate}
\end{defn}
%%%%%%%%%%%%%%%%%%%%%%%%%%%%%%%%%%%%%%%%%%%%%%%%%%%%%%%%%%%%%%%%%%%%%%%%%%%%%
\begin{prop}\cite{HanL}
Let $F=\{f_i\}_{i\in I}$ and $G=\{g_i\}_{i\in I}$ be frames for Hilbert spaces $\h$ and $\K,$ respectively. Then
\begin{enumerate}
\item[(i)]
$F$ and $G$ are disjoint if and only if $RangeT^*_F \cap RangeT^*_G=\{0\}$ and $RangeT^*_F+RangeT^*_G$ is closed subspace of $l_2(I).$
\item[(ii)]
F and G are strongly disjoint if and only if $RangeT^*_F$ and $RangeT^*_G$ are orthogonal.
\end{enumerate}
\end{prop}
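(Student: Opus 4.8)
The plan is to reduce everything to properties of the analysis operators and their ranges. The starting point is the computation of the analysis operator of the joint family. For $f\oplus g\in\h\oplus\K$ one has
\begin{equation*}
T^*_{F\oplus G}(f\oplus g)=\{\langle f\oplus g, f_i\oplus g_i\rangle\}_{i\in I}=\{\langle f,f_i\rangle+\langle g,g_i\rangle\}_{i\in I}=T^*_F f+T^*_G g,
\end{equation*}
so that $\mathrm{Range}\,T^*_{F\oplus G}=\mathrm{Range}\,T^*_F+\mathrm{Range}\,T^*_G$. Moreover, since $F$ and $G$ are Bessel, the elementary estimate $\|T^*_F f+T^*_G g\|^2\le 2B_F\|f\|^2+2B_G\|g\|^2$ shows the upper (Bessel) bound for $\{f_i\oplus g_i\}$ holds automatically. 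Hence $\{f_i\oplus g_i\}$ is a frame for $\h\oplus\K$ if and only if $T^*_{F\oplus G}$ satisfies the lower frame bound, that is, if and only if $T^*_{F\oplus G}$ is bounded below.

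For (i), I would invoke the standard fact that a bounded operator is bounded below exactly when it is injective and has closed range. The closed-range condition is precisely that $\mathrm{Range}\,T^*_{F\oplus G}=\mathrm{Range}\,T^*_F+\mathrm{Range}\,T^*_G$ be closed. For injectivity, note that $T^*_{F\oplus G}(f\oplus g)=0$ means $T^*_F f=-T^*_G g\in \mathrm{Range}\,T^*_F\cap\mathrm{Range}\,T^*_G$; since frame analysis operators are injective, a trivial intersection forces $f=g=0$, while a nonzero common vector $c=T^*_F f=T^*_G g'$ yields the nonzero kernel element $f\oplus(-g')$. Thus injectivity of $T^*_{F\oplus G}$ is equivalent to $\mathrm{Range}\,T^*_F\cap\mathrm{Range}\,T^*_G=\{0\}$, and combining the two conditions gives (i).

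For (ii), the key observation is that replacing $f_i$ by $L_1 f_i$ changes the analysis operator to $T^*_F L_1^*$ and, because $L_1$ (hence $L_1^*$) is invertible, leaves the range unchanged: $\mathrm{Range}(T^*_F L_1^*)=\mathrm{Range}\,T^*_F$, and similarly for $G$. Writing $U=T^*_F L_1^*$ and $V=T^*_G L_2^*$, the three Parseval conditions say that $U$, $V$ and $(f\oplus g)\mapsto Uf+Vg$ are isometries. Expanding $\|Uf+Vg\|^2=\|f\|^2+\|g\|^2$ and using that $U,V$ are isometries leaves $\mathrm{Re}\langle Uf,Vg\rangle=0$; replacing $f$ by $if$ upgrades this to $\langle Uf,Vg\rangle=0$ for all $f,g$, i.e. $\mathrm{Range}\,U\perp\mathrm{Range}\,V$, which is $\mathrm{Range}\,T^*_F\perp\mathrm{Range}\,T^*_G$. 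Conversely, assuming the ranges are orthogonal, I would take $L_1=S_F^{-1/2}$ and $L_2=S_G^{-1/2}$, where $S_F=T_F T^*_F$ is the frame operator, so that $\{L_1 f_i\}$ and $\{L_2 g_i\}$ are the canonical Parseval frames; their analysis operators are isometries with ranges $\mathrm{Range}\,T^*_F$ and $\mathrm{Range}\,T^*_G$, and orthogonality of these ranges makes the joint analysis operator an isometry, so $\{L_1 f_i\oplus L_2 g_i\}$ is Parseval.

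The routine parts are the operator identities and the Bessel estimate. The step requiring the most care is the equivalence \emph{bounded below $\iff$ injective with closed range} in part (i): one must use the bounded-inverse theorem to recover boundedness below from closed range together with injectivity, and one must keep the two range conditions separate, since neither the trivial-intersection condition nor the closedness of the sum alone implies the lower frame bound. In part (ii) the only delicate point is the passage from the real-part identity to full orthogonality via complex scalars, together with the observation that invertible pre-multipliers preserve the ranges, which is exactly what makes the range condition independent of the particular choice of $L_1,L_2$.
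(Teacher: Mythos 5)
Your proof is correct. Note that the paper itself states this proposition without proof (it is quoted from Han--Larson), but its proof of the continuous $g$-frame analogue (Theorem 2.2) follows essentially your approach: for strong disjointness, the same expansion of $\|Uf+Vg\|^2$, the same replacement $f\mapsto if$ to upgrade the vanishing real part to full orthogonality, and the same canonical choice $L_1=S_F^{-1/2}$, $L_2=S_G^{-1/2}$ in the converse; for disjointness, the paper packages the open-mapping step as boundedness of the inverse of the bijection $L(F\oplus G)=F+G$ from $\mathrm{Range}\,T_\Lambda^*\oplus\mathrm{Range}\,T_\Theta^*$ onto the closed sum, which is the same argument as your direct use of ``bounded below if and only if injective with closed range'' applied to the joint analysis operator.
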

%%%%%%%%%%%%%%%%%%%%%%%%%%%%%%%%%%%%%%%%%%%%%%%%%%%%%%%%%%%%%%%%%%%%%%%%%%%%%
 In 1993, Ali, Antoine and Gazeau developed the notion of ordinary frame to a family indexed by a measurable space which are known as continuous frames \cite{Ali2}.
 \begin{defn}
 Let $\h$ be a complex Hilbert space and $(\Omega ,\mu)$ be a
measure space. The mapping $F:\Omega\to \h$ is called a continuous frame
%frame with respect to $(\Omega ,\mu)$,
 if
\begin{enumerate}
\item[(i)] $F$ is weakly-measurable, i.e., for all $f\in \h$,
$\omega\to\langle f,F({\omega})\rangle$
is a measurable function on $\Omega$,
\item[(ii)]
there exist constants $A_F, B_F> 0$ such that
\begin{eqnarray*}\label{deframe}
A_F \|f\|^{2}\leq \int_{\Omega}|\langle
f,F({\omega})\rangle|^{2}\,d\mu(\omega)\leq B_F \|f\|^{2}, \quad
f\in \h.
\end{eqnarray*}
\end{enumerate}
\end{defn}
If $F:\Omega\to \h$ is a continuous frame then the operator $S_F:\h\rightarrow \h$ defined
by $$\langle S_F f,g\rangle=\int_{\Omega}\langle
f,F({\omega})\rangle\langle F({\omega}),g\rangle d\mu(\omega), \quad f,g\in \h,$$ is positive and invertible. $S_F$ is called the continuous frame operator of $F.$ \par
%%%%%%%%%%%%%%%%%%%%%%%%%%%%%%%%%%%%%%%%%%%%%%%%%%%%%%%%%%%%%%%%%%%%%%%%%%%%
 In 2006, $g$-frames or generalized frames introduced by Sun \cite{ws}. Abdollahpour and Faroughi introduced and investigated continuous $g$-frames and Riesz-type continuous $g$-frames  \cite{ostad}.
 Disjointness notions were developed to continuous frames by Gabardo and Han \cite{Gb} and to $g$-frames by Abdollahpour \cite{ostad2}.
In the rest of this paper we assume that $\h$ and $\K$ are complex Hilbert spaces and $(\Omega,\mu)$ is a measure space with positive measure $\mu$ and $\{\K_\omega:\omega\in \Omega\}$ is a family of Hilbert spaces. Now, we summarize some
 facts about continuous g-frames from \cite{ostad}. \par
%%%%%%%%%%%%%%%%%%%%%%%%%%%%%%%%%%%%%%%%%%%%%%%%%%%%%%%%%%%%%%%%%%%%%%%%%%%%
We say that $F\in\prod_{\omega\in \Omega}\K_{\omega}$ is
strongly measurable if $F$ as a mapping of $\Omega$ to
$\bigoplus_{\omega\in \Omega}\K_{\omega}$ is measurable, where
$$\prod_{\omega\in \Omega}\K_{\omega}=\left\{f:\Omega\rightarrow\bigcup_{\omega\in \Omega}\K_{\omega}
:f(\omega)\in \K_{\omega}\right\}.$$
%%%%%%%%%%%%%%%%%%%%%%%%%%%%%%%%%%%%%%%%%%%%%%%%%%%%%%%%%%%%%%%%%%%%%%%%%%%%
\begin{defn}
We say that $\Lambda=\{\Lambda_{\omega}\in
B(\h,\K_{\omega}):\,\omega\in\Omega\}$ is a continuous $g$-frame for $\h$  with respect to
$\{\K_{\omega}:\omega\in \Omega\}$ if
\begin{enumerate}
\item[(i)] for each $f\in \h$, $\{\Lambda_{\omega}f:\omega\in \Omega\}$ is
strongly measurable,
\item[(ii)]
there are two constants $0<A_\Lambda\leq
B_\Lambda<\infty$ such that
\begin{equation}\label{cgframe}
A_\Lambda\|f\|^{2}\leq\int_{\Omega}\|\Lambda_\omega f\|^{2}
d\mu(\omega)\leq B_\Lambda\|f\|^{2},\; f\in \h.
\end{equation}
\end{enumerate}
We call $A_\Lambda,B_\Lambda$ the lower and upper continuous $g$-frame bounds,
respectively.
$\Lambda$ is called a tight continuous $g$-frame if $A_\Lambda=B_\Lambda,$ and a
Parseval continuous $g$-frame if $A_\Lambda=B_\Lambda=1.$ If for each $\omega\in\Omega,$
$\K=\K_{\omega},$ then $\Lambda$ is called a continuous $g$-frame
with respect to $\K$. $\Lambda=\{\Lambda_{\omega}\in
B(\h,\K_{\omega}):\,\omega\in\Omega\}$ is called a continuous
$g$-Bessel family  if the right hand inequality in (\ref{cgframe})
holds for all $f\in \h$. In this case, $B_\Lambda$ is called the Bessel
constant.
\end{defn}
%%%%%%%%%%%%%%%%%%%%%%%%%%%%%%%%%%%%%%%%%%%%%%%%%%%%%%%%%%%%%%%%%%%%%%%%%%%%
If there is no confusion, we use continuous $g$-frame (continuous $g$-Bessel family) instead of continuous $g$-frame for $\h$ with respect to $\{\K_\omega:\omega\in\Omega\}$  (continuous $g$-Bessel family for $\h$ with respect to
$\{\K_{\omega}:\omega\in \Omega\}$).
%%%%%%%%%%%%%%%%%%%%%%%%%%%%%%%%%%%%%%%%%%%%%%%%%%%%%%%%%%%%%%%%%%%%%%%%%%%%
\begin{prop}\label{s}\cite{ostad}
Let $\Lambda=\{\Lambda_{\omega}\in B(\h,\K_{\omega}):\omega \in \Omega\}$ be a continuous
$g$-frame. Then there exists a unique positive and
invertible operator $S_\Lambda:\h\rightarrow \h$ such that
\begin{eqnarray*}\label{di}
\langle S_\Lambda f,g\rangle =\int_{\Omega}\langle f,\Lambda^{*}_{\omega}
\Lambda_{\omega}g\rangle d\mu(\omega),\quad f,g\in \h,
\end{eqnarray*}
 and $A_\Lambda I\leq S_\Lambda\leq B_\Lambda I.$
\end{prop}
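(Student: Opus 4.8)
The plan is to recognize the right-hand side as a bounded sesquilinear form on $\h\times\h$ and then apply the Riesz representation theorem for such forms. First I would observe that $\langle f,\Lambda^{*}_{\omega}\Lambda_{\omega}g\rangle=\langle \Lambda_\omega f,\Lambda_\omega g\rangle$ and set
$$\varphi(f,g)=\int_{\Omega}\langle \Lambda_\omega f,\Lambda_\omega g\rangle\, d\mu(\omega).$$
To be sure this makes sense I would first check that $\omega\mapsto\langle \Lambda_\omega f,\Lambda_\omega g\rangle$ is measurable; this follows from the polarization identity, which expresses it as a finite combination of the maps $\omega\mapsto\|\Lambda_\omega(f+i^{k}g)\|^{2}$, each measurable because $\int_{\Omega}\|\Lambda_\omega h\|^2\,d\mu(\omega)$ is assumed to exist for every $h\in\h$. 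Convergence and boundedness of $\varphi$ then come from two applications of the Cauchy--Schwarz inequality (pointwise in $\K_\omega$, then in $L^2(\Omega,\mu)$) together with the upper frame bound, giving
$$|\varphi(f,g)|\leq\left(\int_{\Omega}\|\Lambda_\omega f\|^2\,d\mu(\omega)\right)^{1/2}\left(\int_{\Omega}\|\Lambda_\omega g\|^2\,d\mu(\omega)\right)^{1/2}\leq B_\Lambda\|f\|\,\|g\|.$$
Hence $\varphi$ is a bounded sesquilinear form.

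Next, by the Riesz representation theorem for bounded sesquilinear forms there is a unique operator $S_\Lambda\in B(\h)$ with $\langle S_\Lambda f,g\rangle=\varphi(f,g)$ for all $f,g\in\h$; uniqueness is immediate since an operator is determined by all the numbers $\langle S_\Lambda f,g\rangle$. Self-adjointness follows from the Hermitian symmetry $\varphi(g,f)=\overline{\varphi(f,g)}$, which gives $\langle S_\Lambda g,f\rangle=\langle g,S_\Lambda f\rangle$, so $S_\Lambda^{*}=S_\Lambda$. Positivity is clear from $\langle S_\Lambda f,f\rangle=\int_{\Omega}\|\Lambda_\omega f\|^{2}\,d\mu(\omega)\geq 0$.

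The two-sided estimate is then just a restatement of the frame inequality (\ref{cgframe}): for every $f\in\h$,
$$A_\Lambda\|f\|^{2}\leq\langle S_\Lambda f,f\rangle\leq B_\Lambda\|f\|^{2},$$
which is precisely $A_\Lambda I\leq S_\Lambda\leq B_\Lambda I$. Finally, for invertibility I would use that $S_\Lambda$ is positive and self-adjoint with $\langle S_\Lambda f,f\rangle\geq A_\Lambda\|f\|^2$; Cauchy--Schwarz then yields $\|S_\Lambda f\|\geq A_\Lambda\|f\|$, so $S_\Lambda$ is bounded below and therefore injective with closed range, while self-adjointness forces $\overline{\mathrm{Range}(S_\Lambda)}=\ker(S_\Lambda)^{\perp}=\h$. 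A closed dense range is all of $\h$, so $S_\Lambda$ is bijective and hence invertible since $A_\Lambda>0$.

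The step I expect to require the most care is establishing that the defining integral is meaningful---namely the measurability of $\omega\mapsto\langle \Lambda_\omega f,\Lambda_\omega g\rangle$ and the convergence bound for $\varphi$. Once boundedness of the form is in hand, the existence, uniqueness, self-adjointness, positivity, the two-sided bound, and invertibility are all standard consequences of operator theory.
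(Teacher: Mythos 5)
Your proof is correct and takes the standard route: the paper does not prove this proposition itself but imports it from \cite{ostad}, and the argument there is essentially yours --- view $\int_{\Omega}\langle \Lambda_\omega f,\Lambda_\omega g\rangle\,d\mu(\omega)$ as a bounded sesquilinear form (Cauchy--Schwarz twice plus the upper bound), obtain $S_\Lambda$ from the Riesz representation theorem, and read off self-adjointness, positivity, the inequalities $A_\Lambda I\leq S_\Lambda\leq B_\Lambda I$, and invertibility from the frame inequality. One minor imprecision worth fixing: the measurability of $\omega\mapsto\|\Lambda_\omega(f+i^{k}g)\|^{2}$ follows from the strong-measurability clause (i) in the definition of a continuous $g$-frame, not from the ``assumed existence'' of the integral $\int_{\Omega}\|\Lambda_\omega h\|^{2}\,d\mu(\omega)$, since the existence of that integral already presupposes measurability; this does not affect the validity of the argument.
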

%%%%%%%%%%%%%%%%%%%%%%%%%%%%%%%%%%%%%%%%%%%%%%%%%%%%%%%%%%%%%%%%%%%%%%%%%%%%
The operator $S_\Lambda$ in Proposition \ref{s} is called the continuous
$g$-frame operator of $\Lambda.$ Also, we have
\begin{equation}\label{abcd}
\langle f,g \rangle =\int_\Omega \langle S_\Lambda^{-1}f,\Lambda_\omega^*\Lambda_\omega g
\rangle \,d\mu(\omega)
=\int_\Omega \langle f,\Lambda_\omega^*\Lambda_\omega S_\Lambda^{-1} g
\rangle\, d\mu(\omega),
\end{equation}
 for all $f,g\in \h.$ \\ We consider the space
$$\widehat {\K}=\left\{F\in \prod_{\omega\in\Omega}\K_{\omega}:
\text{F is strongly measurable},\:
\int_{\Omega}\|F(\omega)\|^{2}d\mu(\omega)<\infty\right\}.$$
It is clear that $\widehat {\K}$ is a Hilbert space with point wise operations and with the inner
product given by
$$\langle F,G \rangle=\int_{\Omega}\langle
F(\omega),G(\omega)\rangle d\mu(\omega),\quad F,G\in \widehat {\K}.$$
%%%%%%%%%%%%%%%%%%%%%%%%%%%%%%%%%%%%%%%%%%%%%%%%%%%%%%%%%%%%%%%%%%%%%%%%%%%%
\begin{prop}\cite{ostad}\label{combi}
Let $\Lambda=\{\Lambda_{\omega}\in B(\h,\K_{\omega}):\omega \in \Omega\}$ be a continuous
$g$-Bessel family. Then the mapping $T_\Lambda:\widehat {\K}\rightarrow \h$
defined by
\begin{eqnarray}\label{ti}
\langle T_\Lambda F,g\rangle=\int_{\Omega}\langle\Lambda_{\omega}^{*}F(\omega),g\rangle
d\mu(\omega),\;F\in\widehat {\K},\,g\in\h,
\end{eqnarray}
is linear and bounded with $\|T_{\Lambda}\|\leq\sqrt{B_\Lambda}.$ Also, for
each $g\in \h$ and $\omega\in\Omega,$
$$(T_\Lambda^{*}g)(\omega)=\Lambda_{\omega}g.$$
\end{prop}
%%%%%%%%%%%%%%%%%%%%%%%%%%%%%%%%%%%%%%%%%%%%%%%%%%%%%%%%%%%%%%%%%%%%%%%%%%%%
\begin{thm}\cite{ostad}\label{ctf}
Let $(\Omega,\mu)$ be a measure space, where $\mu$ is
$\sigma$-finite. Suppose that $\Lambda=\{\Lambda_{\omega}\in
B(\h,\K_{\omega}):\omega \in \Omega\}$ is a family of operators such
 $\{\Lambda_{\omega}f:\omega\in\Omega\}$ is strongly measurable, for each $f\in \h.$
 Then $\Lambda$ is a continuous $g$-frame if and only if the operator
$T_\Lambda:\widehat {\K}\rightarrow \h$
defined by (\ref{ti}) is bounded and onto.
\end{thm}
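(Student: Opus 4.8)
The plan is to reduce both implications to the single identity
$$\|T_\Lambda^* f\|_{\widehat{\K}}^2 = \int_\Omega \|\Lambda_\omega f\|^2\, d\mu(\omega), \qquad f\in\h,$$
which recasts the continuous $g$-frame inequalities (\ref{cgframe}) as the two-sided bound $\sqrt{A_\Lambda}\,\|f\| \le \|T_\Lambda^* f\| \le \sqrt{B_\Lambda}\,\|f\|$ on the adjoint. First I would establish, from the defining relation (\ref{ti}), that whenever $T_\Lambda$ is a well-defined bounded operator its adjoint is given pointwise by $(T_\Lambda^* f)(\omega) = \Lambda_\omega f$; taking the $\widehat{\K}$-norm then yields the displayed identity. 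This is essentially the content of Proposition \ref{combi}, but in the converse direction one cannot invoke that proposition directly because it presupposes the Bessel property, so the adjoint formula must be obtained here from the boundedness hypothesis alone, by testing the equality $\langle F, T_\Lambda^* f\rangle = \int_\Omega \langle F(\omega), \Lambda_\omega f\rangle\, d\mu(\omega)$ against sufficiently many $F\in\widehat{\K}$ and using $\sigma$-finiteness to localize on sets of finite measure.

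For the forward implication, suppose $\Lambda$ is a continuous $g$-frame. Boundedness of $T_\Lambda$, with $\|T_\Lambda\|\le\sqrt{B_\Lambda}$, is exactly Proposition \ref{combi}. For surjectivity I would use the lower frame bound: the displayed identity gives $\|T_\Lambda^* f\| \ge \sqrt{A_\Lambda}\,\|f\|$, so $T_\Lambda^*$ is bounded below. I then appeal to the standard Hilbert-space fact that a bounded operator is surjective if and only if its adjoint is bounded below (equivalently, the closed range theorem together with $\ker T_\Lambda^* = \{0\}$ and $\overline{\mathrm{Range}\,T_\Lambda} = (\ker T_\Lambda^*)^\perp = \h$), concluding that $T_\Lambda$ is onto.

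For the converse, assume $T_\Lambda$ is bounded and onto. Boundedness together with the adjoint identity immediately gives $\int_\Omega\|\Lambda_\omega f\|^2\,d\mu(\omega) \le \|T_\Lambda\|^2\|f\|^2$, the upper bound with $B_\Lambda = \|T_\Lambda\|^2$. For the lower bound I would run the operator fact in the other direction: since $T_\Lambda$ is onto, the open mapping theorem makes it open, and a short duality estimate (writing $\|T_\Lambda^* f\| = \sup_{\|F\|\le 1}|\langle f, T_\Lambda F\rangle|$ and bounding it below by the radius of the ball absorbed into $\mathrm{Range}\,T_\Lambda$) produces $c>0$ with $\|T_\Lambda^* f\| \ge c\,\|f\|$. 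By the identity this reads $\int_\Omega\|\Lambda_\omega f\|^2\,d\mu(\omega) \ge c^2\|f\|^2$, the lower bound, and the measurability requirement (i) holds by hypothesis, so $\Lambda$ is a continuous $g$-frame.

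I expect the main obstacle to be the rigorous justification of the pointwise adjoint formula $(T_\Lambda^* f)(\omega)=\Lambda_\omega f$ in the converse, where only boundedness — not the Bessel property — is available, so one must verify a priori integrability of $\omega\mapsto\langle F(\omega),\Lambda_\omega f\rangle$ and the a.e.\ identification directly; the measure-theoretic care (strong measurability, $\sigma$-finiteness) enters precisely here, whereas the surjectivity versus bounded-below equivalence is a routine consequence of the closed range and open mapping theorems.
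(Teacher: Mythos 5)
Your proof is correct. Note that the paper itself gives no proof of this theorem — it is imported from \cite{ostad} — so there is no in-paper argument to compare against; your route (identify $(T_\Lambda^{*}f)(\omega)=\Lambda_\omega f$, so that the frame inequalities become $\sqrt{A_\Lambda}\|f\|\le\|T_\Lambda^{*}f\|\le\sqrt{B_\Lambda}\|f\|$, then use the equivalence ``$T_\Lambda$ onto $\Leftrightarrow$ $T_\Lambda^{*}$ bounded below'') is the standard one and is essentially the argument of the cited reference, where the lower bound in the converse is produced via a pseudo-inverse $T_\Lambda^{\dagger}$ with $T_\Lambda T_\Lambda^{\dagger}=I$ (cf.\ Lemma \ref{snew}) rather than via your open-mapping/duality estimate — the same fact in different clothing. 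The one point you rightly flag, establishing the adjoint formula in the converse without the Bessel property of Proposition \ref{combi}, is genuinely needed and your sketch handles it: well-definedness of (\ref{ti}) already presupposes integrability of $\omega\mapsto\langle F(\omega),\Lambda_\omega g\rangle$ for every $F\in\widehat{\K}$, and testing against truncations of $T_\Lambda^{*}f-\Lambda_{\cdot}f$ over finite-measure sets on which its norm is bounded (available by $\sigma$-finiteness and strong measurability) yields the a.e.\ identification.
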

%%%%%%%%%%%%%%%%%%%%%%%%%%%%%%%%%%%%%%%%%%%%%%%%%%%%%%%%%%%%%%%%%%%%%%%%%%%%
The operators $T_\Lambda$ and $T_\Lambda^{*}$ in Theorem \ref{ctf} are called the synthesis
and analysis operators of $\Lambda$, respectively.
%%%%%%%%%%%%%%%%%%%%%%%%%%%%%%%%%%%%%%%%%%%%%%%%%%%%%%%%%%%%%%%%%%%%%%%%%%%%
\begin{defn}
Let $\Lambda=\{\Lambda_{\omega}\in B(\h,\K_{\omega}):\omega \in \Omega\}$ and $\Theta=\{\Theta_{\omega}\in B(\h,\K_{\omega}):\omega \in \Omega\}$  be  two continuous $g$-frames such that
$$\langle f,g\rangle=\int_{\Omega}\langle f,
\Theta^{*}_{\omega}\Lambda_{\omega}g\rangle d\mu(\omega),\; f,g\in\h,$$ then $\Theta$ is called a dual continuous $g$-frame of
$\Lambda.$
\end{defn}
%%%%%%%%%%%%%%%%%%%%%%%%%%%%%%%%%%%%%%%%%%%%%%%%%%%%%%%%%%%%%%%%%%%%%%%%%%%%
Let $\Lambda=\{\Lambda_{\omega}\in B(\h,\K_{\omega}):\omega \in \Omega\}$ be a continuous $g$-frame. Then
$\widetilde\Lambda=\{\Lambda_\omega S_\Lambda^{-1}\in B(\h, \K_\omega):\omega \in \Omega\}$ is a continuous
$g$-frame and by (\ref{abcd}), $\widetilde\Lambda$ is a dual of $\Lambda$ and we call
$\widetilde\Lambda$ the canonical dual of $\Lambda$. One can always
get a tight continuous $g$-frame from any continuous $g$-frame, in fact, if $\Lambda=\{\Lambda_{\omega}\in B(\h,\K_{\omega}):\omega \in \Omega\}$ is a continuous $g$-frame then $\{\Lambda_\omega S_\Lambda^{-1/2}\in B(\h, \K_\omega):\omega \in \Omega\}$ is a Parseval continuous $g$-frame.
%%%%%%%%%%%%%%%%%%%%%%%%%%%%%%%%%%%%%%%%%%%%%%%%%%%%%%%%%%%%%%%%%%%%%%%%%%%%
\par Two continuous $g$-Bessel families $\Lambda=\{\Lambda_{\omega}\in B(\h, \K_{\omega}):\omega \in \Omega\}$ and $\Theta=\{\Theta_{\omega}\in B(\h, \K_{\omega}):\omega \in \Omega\}$ are weakly equal, if for all $f\in H$,
\begin{eqnarray*}
\Lambda_\omega f=\Theta_\omega f, \quad a.e.\quad\omega\in\Omega.
\end{eqnarray*}
\par If the continuous $g$-frame
$\Lambda=\{\Lambda_{\omega}\in B(\h, \K_{\omega}):\omega \in \Omega\}$ have only one
dual (weakly), i.e., every dual of $\Lambda$ is weakly equal to the
canonical dual of $\Lambda$, then $\Lambda$ is called a Riesz-type continuous
$g$-frame.
%%%%%%%%%%%%%%%%%%%%%%%%%%%%%%%%%%%%%%%%%%%%%%%%%%%%%%%%%%%%%%%%%%%%%%%%%%%%
\begin{thm}\cite{ostad}\label{riezs}
Let $\Lambda=\{\Lambda_{\omega}\in B(\h, \K_{\omega}):\omega \in \Omega\}$ be a continuous
$g$-frame. Then $\Lambda$ is a Riesz-type continuous $g$-frame if and only
if $Range
T^{*}_{\Lambda}=\widehat {\K}.$
\end{thm}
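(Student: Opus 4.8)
The plan is to translate every notion in sight into a statement about the synthesis and analysis operators, and then to reduce the Riesz-type property to the injectivity of $T_\Lambda$. First I would record three identities. Since $T_\Lambda T_\Lambda^*=S_\Lambda$ and $\widetilde\Lambda_\omega=\Lambda_\omega S_\Lambda^{-1}$, a direct computation from (\ref{ti}) gives $T_{\widetilde\Lambda}=S_\Lambda^{-1}T_\Lambda$, hence $T_{\widetilde\Lambda}T_\Lambda^*=S_\Lambda^{-1}S_\Lambda=I_\h$. More generally, unwinding the definition of a dual shows that a continuous $g$-frame $\Theta$ is a dual of $\Lambda$ exactly when $T_\Theta T_\Lambda^*=I_\h$; and from $(T_\Theta^*g)(\omega)=\Theta_\omega g$ one sees that $\Theta$ is weakly equal to $\widetilde\Lambda$ precisely when $T_\Theta^*=T_{\widetilde\Lambda}^*$, i.e. $T_\Theta=T_{\widetilde\Lambda}$. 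Finally, because $\Lambda$ is a continuous $g$-frame, the frame inequality gives $A_\Lambda\|f\|^2\le\|T_\Lambda^*f\|^2$, so $T_\Lambda^*$ is bounded below, its range is closed, and $\mathrm{Range}\,T_\Lambda^*=(\ker T_\Lambda)^\perp$. Thus the hypothesis $\mathrm{Range}\,T_\Lambda^*=\widehat{\K}$ is equivalent to $\ker T_\Lambda=\{0\}$.

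For the direction where $\mathrm{Range}\,T_\Lambda^*=\widehat{\K}$, let $\Theta$ be any dual. Then $(T_\Theta-T_{\widetilde\Lambda})T_\Lambda^*=I_\h-I_\h=0$, so the bounded operator $T_\Theta-T_{\widetilde\Lambda}$ vanishes on $\mathrm{Range}\,T_\Lambda^*=\widehat{\K}$. Hence $T_\Theta=T_{\widetilde\Lambda}$, so $\Theta$ is weakly equal to the canonical dual, and $\Lambda$ is Riesz-type.

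For the converse I would argue contrapositively: assuming $\mathrm{Range}\,T_\Lambda^*\neq\widehat{\K}$, equivalently $\ker T_\Lambda\neq\{0\}$, I would exhibit a dual not weakly equal to $\widetilde\Lambda$. Fix $F_0\in\ker T_\Lambda$ with $F_0\neq 0$ and $h_0\in\h$ with $h_0\neq 0$, and define a family by $\Theta_\omega g=\widetilde\Lambda_\omega g+\langle g,h_0\rangle F_0(\omega)$. Each $\Theta_\omega$ lies in $B(\h,\K_\omega)$ (a rank-one perturbation of $\widetilde\Lambda_\omega$), and $\Theta$ is strongly measurable since $F_0\in\widehat{\K}$ is. The estimate $\int_\Omega\|\Theta_\omega g\|^2\,d\mu(\omega)\le 2\int_\Omega\|\widetilde\Lambda_\omega g\|^2\,d\mu(\omega)+2\,|\langle g,h_0\rangle|^2\,\|F_0\|^2$ shows $\Theta$ is a continuous $g$-Bessel family, and its synthesis operator is $T_\Theta=T_{\widetilde\Lambda}+U$, where $UF=\langle F,F_0\rangle h_0$ (indeed $U^*h=\langle h,h_0\rangle F_0$ recovers the added term $\langle g,h_0\rangle F_0(\omega)$). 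Because $F_0\in\ker T_\Lambda$, we have $UT_\Lambda^*g=\langle T_\Lambda^*g,F_0\rangle h_0=\langle g,T_\Lambda F_0\rangle h_0=0$, whence $T_\Theta T_\Lambda^*=I_\h$; in particular $T_\Theta$ is onto, so by Theorem \ref{ctf} $\Theta$ is a continuous $g$-frame and thus a dual of $\Lambda$. Since $U\neq 0$ we have $T_\Theta\neq T_{\widetilde\Lambda}$, so $\Theta$ is not weakly equal to the canonical dual, and $\Lambda$ fails to be Riesz-type.

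The easy half is the first direction, a one-line subtraction argument. The main obstacle is the converse, specifically the realization step: one must produce the perturbed dual as a genuine family $\{\Theta_\omega\}$ rather than an abstract operator on $\widehat{\K}$, and then verify its strong measurability, its Bessel bound, and that the rank-one operator $U$ indeed corresponds via $(T_\Theta^*g)(\omega)=\Theta_\omega g$ to the summand $\langle g,h_0\rangle F_0(\omega)$. Equally, the almost-everywhere bookkeeping in the definition of weak equality must be handled so that $T_\Theta\neq T_{\widetilde\Lambda}$ truly rules out weak equality of $\Theta$ and $\widetilde\Lambda$.
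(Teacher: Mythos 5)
The paper never proves this theorem --- it is quoted from \cite{ostad} --- so your argument has to stand on its own, and it essentially does. Your reductions are all valid: $T_{\widetilde\Lambda}=S_\Lambda^{-1}T_\Lambda$; a continuous $g$-frame $\Theta$ is a dual of $\Lambda$ iff $T_\Theta T_\Lambda^*=I_\h$; weak equality of $\Theta$ and $\widetilde\Lambda$ is the same as $T_\Theta^*=T_{\widetilde\Lambda}^*$ (legitimate because elements of $\widehat{\K}$ are identified a.e.); the frame inequality forces $\mathrm{Range}\,T_\Lambda^*$ to be closed, hence equal to $(\ker T_\Lambda)^\perp$; the subtraction argument then settles the forward direction, and in the converse the rank-one perturbation $T_\Theta=T_{\widetilde\Lambda}+U$ with $UF=\langle F,F_0\rangle h_0$ is correctly matched to the family $\Theta_\omega g=\widetilde\Lambda_\omega g+\langle g,h_0\rangle F_0(\omega)$, satisfies $T_\Theta T_\Lambda^*=I_\h$ because $F_0\in\ker T_\Lambda$, and is nontrivial since $UF_0=\|F_0\|^2h_0\neq 0$. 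This is the standard proof of the result.

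The one step that does not go through as written is the appeal to Theorem \ref{ctf} to upgrade $\Theta$ from a continuous $g$-Bessel family to a continuous $g$-frame: that theorem assumes $\mu$ is $\sigma$-finite, a hypothesis that appears neither in Theorem \ref{riezs} nor in the paper's standing assumptions, so as written your converse proves a weaker statement. Fortunately you do not need it. Since $T_\Theta T_\Lambda^*=I_\h$, for every $f\in\h$,
\[
\|f\|^2=\langle T_\Theta T_\Lambda^* f,f\rangle=\langle T_\Lambda^* f,T_\Theta^* f\rangle
\le\|T_\Lambda^* f\|\,\|T_\Theta^* f\|
\le\sqrt{B_\Lambda}\,\|f\|\Bigl(\int_\Omega\|\Theta_\omega f\|^2\,d\mu(\omega)\Bigr)^{1/2},
\]
hence $\int_\Omega\|\Theta_\omega f\|^2\,d\mu(\omega)\ge B_\Lambda^{-1}\|f\|^2$, so the Bessel family $\Theta$ is automatically a continuous $g$-frame with lower bound $B_\Lambda^{-1}$, for an arbitrary measure space. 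With this substitution your proof is complete and correct.
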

We mention that the authors of this paper studied some properties of continuous $g$-frames and Riesz-type continuous $g$-frames in \cite{abkh}.
\section{{\textbf Disjointness of continuous $g$-frames }}
In this section we study disjointness, strongly disjointness, weakly disjointness for continuous $g$-frames. We prove some results concern with these concepts and we construct a continuous $g$-frame by disjoint and strongly disjoint continuous $g$-frames.
\begin{defn}
Let $\Lambda=\{\Lambda_\omega \in B(\h, \K_{\omega}) : \omega \in \Omega \}$ and $\Theta=\{\Theta_\omega \in B(\K,\K_\omega) : \omega \in \Omega \}$ be two  continuous $g$-frames. Then $\Lambda$ and $\Theta$ are called:
\begin{enumerate}
\item[(i)] Strongly disjoint, if $RangeT_{\Lambda}^* \bot RangeT_{\Theta}^*.$
\item[(ii)] Disjoint, if $RangeT_{\Lambda}^*\cap RangeT_{\Theta}^*=\{0\}$ and $RangeT_{\Lambda}^*+RangeT_\Theta^*$ is a closed subspace of $\widehat {\K}.$
\item[(iii)] Complementary pair, if $RangeT^*_\Lambda \cap RangeT_\Theta^*=\{0\}$ and $RangeT^*_\Lambda+RangeT_\Theta^*=\widehat {\K}.$
\item[(iv)] Strongly complementary pair, if $RangeT_\Lambda^*\oplus RangeT_\Theta^*=\widehat {\K}.$
\item[(v)] Weakly disjoint, if $RangeT^*_\Lambda \cap RangeT_\Theta^*=\{0\}.$
\end{enumerate}
\end{defn}
%%%%%%%%%%%%%%%%%%%%%%%%%%%%%%%%%%%%%%%%%%%%%%%
%\par Let $T\in B(\h)$ and $S\in B(\K),$ we define the operator $T\oplus S:\h\oplus\K\rightarrow \h\oplus\K$ by
%\begin{eqnarray*}
%(T\oplus S)(f\oplus g)=Tf\oplus Sg.
%\end{eqnarray*}
%%%%%%%%%%%%%%%%%%%%%%%%%%%%%%%%%%%%%%%%%%%%%%%%%%%%%%%%%%%%%%%%%%%%%%%%%%%%
\begin{thm}\label{diss}
Let $(\Omega, \mu)$ be a mesure space. Let $\Lambda=\{\Lambda_\omega \in B(\h,\K_\omega) : \omega \in \Omega \}$ and $\Theta=\{\Theta_\omega \in B(\K, \K_{\omega}) : \omega \in \Omega \}$ be two continuous $g$-frames. Consider $\Gamma=\{\Gamma_\omega \in B(\h\oplus \K,\K_\omega) : \omega \in \Omega \}$ where
\begin{align*}
\Gamma_\omega(h\oplus k)=\Lambda_\omega h+\Theta_\omega k,\quad\omega \in \Omega,h\in \h,k\in \K.
\end{align*}
Then $\Lambda$ and $\Theta$ are
\begin{enumerate}
\item[(i)] Strongly disjoint if and only if there exist invertible operators $L_1\in B(\h)$ and $L_2\in B(\K)$ such that $\{\Lambda_\omega L_ 1\in B(\h, \K_{\omega}) : \omega \in \Omega \}$ , $\{\Theta_\omega L_2 \in B(\K,\K_\omega) : \omega \in \Omega \}$ and $\{\Delta_\omega \in B(\h\oplus \K,\K_\omega) : \omega \in \Omega \}$ are Parseval continuous $g$-frames, where
\begin{align*}
\Delta_\omega(h\oplus k)=\Lambda_\omega L_1h+\Theta_\omega L_2 k,\quad \omega \in \Omega,h\in \h, k\in \K.
\end{align*}
\item[(ii)] Disjoint if and only if $\Gamma$ is a continuous $g$-frame.
\item[(iii)] Complementary pair if and only if $\Gamma$ is a Riesz-type continuous $g$-frame.
\item[(iv)] Strongly complementary pair if and only if they are strongly disjoint and $\Gamma$ is a Riesz-type continuous $g$-frame.
\item[(v)] Weakly disjoint if and only if
\begin{eqnarray*}
\{f\oplus g:\Gamma_\omega(f\oplus g)=0, \;\omega\in \Omega\}=\{0\}.
\end{eqnarray*}
\end{enumerate}
\end{thm}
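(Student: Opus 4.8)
The plan is to translate every property of the three continuous $g$-frames into a statement about the ranges of the analysis operators inside $\widehat{\K}$, and then match these against the five definitions. The backbone is a pair of operator identities. First I would compute, for $x\in\K_\omega$, that $\Gamma_\omega^* x=\Lambda_\omega^* x\oplus\Theta_\omega^* x$, and deduce from Proposition \ref{combi} the two formulas
\begin{equation*}
T_\Gamma^*(h\oplus k)=T_\Lambda^* h+T_\Theta^* k,\qquad T_\Gamma F=T_\Lambda F\oplus T_\Theta F,
\end{equation*}
for $h\in\h$, $k\in\K$, $F\in\widehat{\K}$. The first identity immediately yields the crucial range equality $\mathrm{Range}\,T_\Gamma^*=\mathrm{Range}\,T_\Lambda^*+\mathrm{Range}\,T_\Theta^*$, and the second shows that $\Gamma$ is a continuous $g$-Bessel family (since $\|T_\Gamma F\|^2=\|T_\Lambda F\|^2+\|T_\Theta F\|^2$), so that Theorem \ref{ctf} is applicable.

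Since $\Lambda$ and $\Theta$ are continuous $g$-frames, $T_\Lambda^*$ and $T_\Theta^*$ are bounded below; hence $M:=\mathrm{Range}\,T_\Lambda^*$ and $N:=\mathrm{Range}\,T_\Theta^*$ are closed, and $T_\Lambda^*,T_\Theta^*$ are isomorphisms of $\h,\K$ onto $M,N$. For (v) I would observe that $\ker T_\Gamma^*$ is exactly the set $\{f\oplus g:\Gamma_\omega(f\oplus g)=0,\ \text{a.e.}\ \omega\}$; then $T_\Gamma^*$ is injective iff $T_\Lambda^* f=-T_\Theta^* g$ forces $f=g=0$, which is readily seen to be equivalent to $M\cap N=\{0\}$. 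For (ii), by Theorem \ref{ctf} $\Gamma$ is a continuous $g$-frame iff $T_\Gamma$ is onto, i.e. iff $T_\Gamma^*$ is bounded below. The main obstacle is the functional-analytic lemma that, after identifying $\h\oplus\K$ with the external direct sum $M\oplus N$ through the isomorphisms $T_\Lambda^*,T_\Theta^*$, the map $T_\Gamma^*$ becomes the addition $(x,y)\mapsto x+y$ on $M\oplus N$, and such an addition map is bounded below precisely when it is injective with closed range, that is, when $M\cap N=\{0\}$ and $M+N$ is closed. This is exactly disjointness, proving (ii).

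For (i) the key observation is that for any invertible $L_1\in B(\h)$ one has $T_{\Lambda L_1}^*=T_\Lambda^* L_1$, so $\mathrm{Range}\,T_{\Lambda L_1}^*=M$, and likewise $\mathrm{Range}\,T_{\Theta L_2}^*=N$ for invertible $L_2$. Writing $\Lambda'=\{\Lambda_\omega L_1\}$, $\Theta'=\{\Theta_\omega L_2\}$ and noting that $\Delta$ is the construction $\Gamma$ applied to $\Lambda',\Theta'$, I would use $S_\Delta=T_\Delta T_\Delta^*$ together with the two backbone identities to obtain the block form
\begin{equation*}
S_\Delta(h\oplus k)=\bigl(S_{\Lambda'}h+T_{\Lambda'}T_{\Theta'}^* k\bigr)\oplus\bigl(T_{\Theta'}T_{\Lambda'}^* h+S_{\Theta'}k\bigr).
\end{equation*}
When $\Lambda'$ and $\Theta'$ are Parseval the diagonal blocks are the identities, so $\Delta$ is Parseval iff the off-diagonal block $T_{\Lambda'}T_{\Theta'}^*$ vanishes, and the latter is equivalent to $M\perp N$. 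For the forward implication I would take $L_1=S_\Lambda^{-1/2}$, $L_2=S_\Theta^{-1/2}$, which are invertible and make $\Lambda',\Theta'$ Parseval by the construction recalled before this section; the reverse implication is read off directly, since the range identities $\mathrm{Range}\,T_{\Lambda'}^*=M$, $\mathrm{Range}\,T_{\Theta'}^*=N$ hold for any invertible $L_1,L_2$.

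Finally, (iii) and (iv) follow by combining the previous parts with Theorem \ref{riezs} and the range equality $\mathrm{Range}\,T_\Gamma^*=M+N$. For (iii): a complementary pair has $M\cap N=\{0\}$ with $M+N=\widehat{\K}$ closed, so $\Gamma$ is a continuous $g$-frame by (ii) and $\mathrm{Range}\,T_\Gamma^*=\widehat{\K}$, whence $\Gamma$ is Riesz-type by Theorem \ref{riezs}; conversely, a Riesz-type $\Gamma$ is in particular a continuous $g$-frame, so (ii) gives $M\cap N=\{0\}$ and $\mathrm{Range}\,T_\Gamma^*=M+N=\widehat{\K}$. For (iv), a strongly complementary pair means $M\oplus N=\widehat{\K}$ orthogonally, i.e. $M\perp N$ together with $M+N=\widehat{\K}$; since $M\perp N$ already forces $M\cap N=\{0\}$, this is precisely strong disjointness together with the complementary-pair condition, which by (iii) is $\Gamma$ being Riesz-type. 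I expect the only genuinely delicate point throughout to be the bounded-below/closed-sum lemma underlying (ii); everything else is bookkeeping with the two backbone identities.
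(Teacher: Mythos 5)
Your proposal is correct, and its skeleton is essentially the paper's: translate everything into statements about $M=\mathrm{Range}\,T_\Lambda^*$ and $N=\mathrm{Range}\,T_\Theta^*$, use $L_1=S_\Lambda^{-1/2}$, $L_2=S_\Theta^{-1/2}$ for the forward direction of (i), reduce (ii) to the addition map on $M\oplus N$, and settle (iii)--(iv) by Theorem \ref{riezs}. Two points of execution differ, both in your favor as a matter of economy. For (i), the paper expands $\int_\Omega\|\Delta_\omega(h\oplus k)\|^2d\mu(\omega)$ and kills the cross term by a polarization trick (first the real part, then the imaginary part via $h\mapsto ih$); your block decomposition of $S_\Delta$ absorbs that polarization into the single identity ``Parseval $\Leftrightarrow S_\Delta=I$'' and reads off orthogonality from the vanishing of the off-diagonal block $T_{\Lambda'}T_{\Theta'}^*$. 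For (ii), the paper proves the two implications separately (frame inequality $\Rightarrow$ trivial intersection and closed sum; conversely, bijectivity of $L(F\oplus G)=F+G$ plus the inverse mapping theorem $\Rightarrow$ frame bounds), whereas you get both at once from the standard lemma that a bounded injective operator is bounded below iff its range is closed; this is the same operator $L$ and the same underlying open-mapping-theorem fact, just packaged as a single equivalence.

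One blemish should be repaired: you invoke Theorem \ref{ctf}, whose hypothesis requires $\mu$ to be $\sigma$-finite, while Theorem \ref{diss} assumes no such thing (and the paper's proof never uses \ref{ctf}). The appeal is unnecessary: once you know $\Gamma$ is a continuous $g$-Bessel family, the identity $\int_\Omega\|\Gamma_\omega(h\oplus k)\|^2d\mu(\omega)=\|T_\Gamma^*(h\oplus k)\|^2$ shows that the lower frame bound for $\Gamma$ \emph{is} the statement that $T_\Gamma^*$ is bounded below, so your chain of equivalences in (ii) goes through with no citation at all. (Also note that establishing the Bessel property should precede invoking Proposition \ref{combi} for $\Gamma$; the direct estimate $\int_\Omega\|\Lambda_\omega h+\Theta_\omega k\|^2d\mu(\omega)\le 2\max\{B_\Lambda,B_\Theta\}\|h\oplus k\|^2$ does this.) Finally, in (v) you characterize $\ker T_\Gamma^*$ by almost-everywhere vanishing while the statement quantifies over all $\omega\in\Omega$; you gloss the discrepancy, but so does the paper's own proof, so this does not put you below its standard.
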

%%%%%%%%%%%%%%%%%%%%%%%%%%%%%%%%%%%%%%%%%%%%%%%%%%%%%%%
\begin{proof}
 (i) For any $h\in \h$ and $k\in \K$ we have
 \begin{align*}
 \int_\Omega \|\Lambda_\omega S_{\Lambda}^{-1/2}h&+\Theta_\omega S_{\Theta}^{-1/2}k\|^2 d\mu(\omega)
 \\&=\int_\Omega \|\Lambda_{\omega }S_{\Lambda}^{-1/2}h \|^2 d\mu(\omega)+\int_\Omega \|\Theta_\omega S_{\Theta}^{-1/2}k \|^2 d\mu(\omega) 
 \\&+2Re\int_\Omega \big\langle\Lambda_\omega S_{\Lambda}^{-1/2}h,\Theta_\omega S_\Theta^{-1/2}k\big\rangle d\mu(\omega) 
 \\&=\int_\Omega \|\Lambda_\omega S_\Lambda^{-1/2}h \|^2 d\mu(\omega)+\int_\Omega \|\Theta_\omega S_\Theta^{-1/2}k \|^2 d\mu(\omega)
  \\&=\|h\|^2+\|k\|^2=\|h\oplus k\|^2.
 \end{align*}
 It is sufficient to take $L_1=S_\Lambda^{-1/2}$ and $L_2=S_\Theta^{-1/2}$.
 \\Conversly, for every $h\in \h$ and $k\in \K$ we have
 \begin{align*}
 \|h\oplus k\|^2&=\int_\Omega  \|\Delta_\omega(h\oplus k)\|^2 d\mu(\omega)
 \\&=\int_\Omega  \|\Lambda_\omega L_1h\|^2 d\mu(\omega)+\int_\Omega  \|\Theta_\omega L_2k\|^2 d\mu(\omega)
 \\&+2Re\int_\Omega \big\langle \Lambda_\omega L_1 h,\Theta_\omega L_2k\big\rangle d\mu(\omega)
 \\&=\|h\|^2+\|k\|^2+2Re\int_\Omega \big\langle \Lambda_\omega L_1 h,\Theta_\omega L_2k\big\rangle d\mu(\omega).
 \end{align*}
 Therefore
 \begin{align*}
 Re\int_\Omega \langle \Lambda_\omega L_1 h,\Theta_\omega L_2k\rangle d\mu(\omega)=0, \quad h\in\h,k\in\K.
 \end{align*}
 On the other hand
\begin{align*}
Im\int_\Omega \langle \Lambda_\omega L_1 h,\Theta_\omega L_2k\rangle d\mu(\omega)=-Re\int_\Omega \langle \Lambda_\omega L_1(ih),\Theta_\omega L_2k\rangle d\mu(\omega)=0.
\end{align*}
 Thus
 \begin{align*}
 \int_\Omega \langle \Lambda_\omega L_1 h,\Theta_\omega L_2k\rangle d\mu(\omega)=0,\quad h\in\h,k\in\K.
 \end{align*}
 Now, since $L_1$ and $L_2$ are invertible operators, $RangeT_\Lambda^*\bot RangeT_\Lambda^*.$
\\ (ii) Let $\Gamma$ be a continuous $g$-frame for $\h\oplus\K.$ Then for any $h\in \h, k\in \K$ we have
\begin{eqnarray}\label{yav1}
 A_\Gamma(\|h\|^2+\|k\|^2)\le \|T_{\Lambda}^* h+T_{\Theta}^* k\|^2  \le B_\Gamma(\|h\|^2+\|k\|^2).
\end{eqnarray}
Let there exist $h_1\in \h$ and $k_1\in \K$ such that $T_\Lambda^*h_1=T_\Theta^*k_1$. By the left hand inequality (\ref{yav1}), $h_1=k_1=0$ and so, $T_\Lambda^*h_1=T_\Theta^*k_1=0$. Consequently, $RangeT_{\Lambda}^*\cap RangeT_{\Theta}^*=\{0\}.$
 Also, $RangeT_{\Lambda}^*+RangeT_\Theta^*=Range T_\Gamma^*$ is a closed subspace of $\widehat {\K}$.
 \\Conversely, the operator
 \begin{align*}
 L:Range T_\Lambda^*\oplus Range T_\Theta^*&\rightarrow Range T_\Lambda^*+Range T_\Theta^*,\\ L(F\oplus G)&=F+G
 \end{align*}
 is a bijective bounded operator, In fact
 \begin{align*}
 \|L(F\oplus G)\|^2=\| F + G\|^2 &\le(\|F\|+\|G\|)^2
 \\&\le 2(\|F\|^2+\|G\|^2)=2\|F\oplus G\|^2.
 \end{align*}
Then for any $h\in \h,$ $k\in \K$ we have
 \begin{align*}
\|L^{-1}\|^{-2}.\min \{A_\Lambda,A_\Theta\}.\|h\oplus k\|^2 &\le \int_\Omega \|\Gamma_\omega(h\oplus k)\|^2d\mu(\omega)
\\&=\|L(T_\Lambda^*h\oplus T_\Theta^*k)\|^2
\\&\le \|L\|^2 .\max\{B_\Lambda,B_\Theta\}.\|h\oplus k\|^2.
 \end{align*}
 \\(iii)  Let $\Gamma$ be a Riesz-type continuous $g$-frame for $\h\oplus \K.$ Then by Theorem \ref{riezs}, we have
 \begin{eqnarray*}
 RangeT_\Lambda^*+RangeT_\Theta^*=RangeT_\Gamma^*=\widehat  {\K}.
 \end{eqnarray*}
 On the other hand, let $\phi\in RangeT_\Lambda^*\cap RangeT_\Theta^*$. Then there exist $h\in \h$ and $k\in \K$ such that $T_\Lambda^*h=T_\Theta^*k=\phi$. Therefore
 \begin{eqnarray*}
 T_\Gamma^*(h\oplus 0)=\phi=T_\Gamma^*(0\oplus k),
 \end{eqnarray*}
 since $T_\Gamma^*$ is one-to-one, $h=k=0$ and so $\phi=0.$
 \\Conversely, by the part (ii), $\Gamma$ is a continuous g-frame and
 \begin{eqnarray*}
 RangeT_\Gamma^*=RangeT_\Lambda^*+RangeT_\Theta^*=\widehat {\K}.
 \end{eqnarray*}
 So, by Theorem \ref{riezs}, $\Gamma$ is a Riesz-type continuous $g$-frame.
\\(iv) By applying the definition of strongly disjoint and (iii), the poof is completed.
\\(v) Let $\Lambda$ and $\Theta$ be weakly disjoint and $h\in\h,k\in\K$ such that $\Gamma_\omega(h\oplus k)=0$ for all $\omega\in\Omega,$
then
\begin{eqnarray*}
T_\Lambda^*h=T_\Theta^*(-k)\in RangeT_\Lambda^*\cap RangeT_\Theta^*=\{0\},
\end{eqnarray*}
and so, $h=k=0.$
Conversely, let $\phi\in RangeT_\Lambda^*\cap RangeT_\Theta^*.$ Then there exist $h\in\h,k\in\K$ such that $T_\Lambda^*h=T_\Theta^*k=\phi,$  therefore
\begin{eqnarray*}
T_\Gamma^*(h\oplus(-k))=T_\Lambda^* h-T_\Theta^* k=0.
\end{eqnarray*}
Hence $h=k=0,$ consequently, $\phi=0.$
\end{proof}
%%%%%%%%%%%%%%%%%%%%%%%%%%%%%%%%%%%%%%%%%%%%%%%%%%%%%
\begin{lem}\cite{cc4}\label{snew}
Suppose that $T : \K \rightarrow\h$ is a linear bounded, surjective operator. Then there exists a linear bounded operator (called the pseudo-inverse of $T$) $T^\dagger : \h\rightarrow \K$  for which $TT^\dagger f = f$ , for any $f\in\h.$
\end{lem}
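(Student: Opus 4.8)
The plan is to produce an explicit right inverse of $T$ built from its adjoint. Since $\K$ and $\h$ are Hilbert spaces and $T$ is bounded, the adjoint $T^*:\h\to\K$ is also bounded, and the key observation is that surjectivity of $T$ forces $T^*$ to be bounded below. First I would note that, because $T$ is onto, its range equals $\h$ and is therefore closed; since $\ker T^*=(\operatorname{Range}T)^{\perp}=\{0\}$, the operator $T^*$ is injective. Combining injectivity with the closedness of the range (or, equivalently, invoking the open mapping theorem for the surjection $T$) yields a constant $c>0$ with $\norm{T^*f}\ge c\norm{f}$ for every $f\in\h$.

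Next I would use this lower bound to invert $TT^*$. The operator $TT^*:\h\to\h$ is self-adjoint and positive, and the estimate $\langle TT^*f,f\rangle=\norm{T^*f}^2\ge c^2\norm{f}^2$ shows that $TT^*\ge c^2 I$. A positive self-adjoint operator bounded below is invertible with bounded inverse, so $(TT^*)^{-1}\in B(\h)$ exists.

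Finally I would set $T^{\dagger}:=T^*(TT^*)^{-1}:\h\to\K$. As a composition of bounded operators it is bounded, and a direct computation gives $TT^{\dagger}=TT^*(TT^*)^{-1}=I_{\h}$, that is, $TT^{\dagger}f=f$ for all $f\in\h$, which is exactly the asserted identity.

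The main obstacle is the first step: establishing that $T^*$ is bounded below, since this is where surjectivity is genuinely exploited, through either the closed range theorem or the open mapping theorem. Once the lower bound is in hand, the invertibility of $TT^*$ and the verification of the right-inverse identity are routine.
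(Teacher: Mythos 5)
Your proof is correct, but note that the paper itself gives no proof of this lemma: it is quoted from the cited book of Christensen and Jensen, so there is no internal argument to compare against. Relative to the standard proof in that source, your route is genuinely different. The textbook argument restricts $T$ to $(\ker T)^{\perp}$, observes that this restriction is a bounded bijection onto $\operatorname{Range}T$, inverts it by the bounded inverse theorem, and composes with the orthogonal projection onto $\operatorname{Range}T$; that argument works for any bounded operator with closed (not necessarily full) range, yielding $TT^{\dagger}f=f$ only for $f\in\operatorname{Range}T$. Your construction instead produces the explicit formula $T^{\dagger}=T^{*}(TT^{*})^{-1}$: surjectivity gives a lower bound $\|T^{*}f\|\geq c\|f\|$, hence $TT^{*}\geq c^{2}I$ is self-adjoint, positive and invertible, and the right-inverse identity follows by direct computation. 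This buys an explicit operator (in fact the Moore--Penrose pseudo-inverse, with $\operatorname{Range}T^{\dagger}=\operatorname{Range}T^{*}=(\ker T)^{\perp}$) at the cost of requiring surjectivity, which is exactly the hypothesis here, so nothing is lost. One small point of precision: when you say you combine injectivity of $T^{*}$ with ``closedness of the range,'' the range whose closedness you need is that of $T^{*}$, which requires the closed range theorem to deduce from surjectivity of $T$; your parenthetical appeal to the open mapping theorem applied to $T$ is the cleaner, self-contained way to get the bound, since an open surjection gives $\delta>0$ with $\|T^{*}f\|\geq\sup\{|\langle f,Tk\rangle|:\|k\|\leq 1\}\geq\delta\|f\|$.
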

%%%%%%%%%%%%%%%%%%%%%%%%%%%%%%%%%%%%%%%%%%%%%%%%%%%%%%%%%%%%%%%%%%%%%%%%%%%%
By generalizing a result from \cite{newnew} we get a following proposition to construct a continuous $g$-frame from disjoint continuous $g$-frames.
\begin{prop}\label{ssnew}
Let $\Lambda=\{\Lambda_\omega\in B(\h,\K_\omega):\omega\in\Omega\}$ and $\Theta=\{\Theta_\omega\in B(\h,\K_\omega):\omega\in\Omega\}$ be two disjoint continuous $g$-frames and $L_1,L_2\in B(\h).$ If $L_1$ or $L_2$ is surjective, then $\Lambda L_1^*+\Theta L_2^*=\{\Lambda_\omega L_1^*+\Theta_\omega L_2^*\in B(\h,\K_\omega):\omega\in\Omega\}$ is a continuous $g$-frame.
\end{prop}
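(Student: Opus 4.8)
The plan is to compute the analysis operator of the candidate family and then verify the two frame inequalities, the upper one being routine and the lower one carrying all the weight. Throughout write $\Phi=\{\Phi_\omega\}_{\omega\in\Omega}$ with $\Phi_\omega=\Lambda_\omega L_1^*+\Theta_\omega L_2^*$.

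First I would observe that for each $f\in\h$ one has $\Phi_\omega f=\Lambda_\omega(L_1^*f)+\Theta_\omega(L_2^*f)$, a pointwise sum of two strongly measurable families, so $\{\Phi_\omega f:\omega\in\Omega\}$ is strongly measurable and condition (i) of the definition of a continuous $g$-frame holds. From $(T_\Phi^*f)(\omega)=\Phi_\omega f$ I get the operator identity $T_\Phi^*f=T_\Lambda^*L_1^*f+T_\Theta^*L_2^*f$. The upper bound is then immediate: by the triangle inequality in $\widehat{\K}$ together with $\|T_\Lambda\|\le\sqrt{B_\Lambda}$ and $\|T_\Theta\|\le\sqrt{B_\Theta}$ from Proposition \ref{combi}, one finds $\int_\Omega\|\Phi_\omega f\|^2\,d\mu(\omega)=\|T_\Phi^*f\|^2\le(\sqrt{B_\Lambda}\|L_1\|+\sqrt{B_\Theta}\|L_2\|)^2\|f\|^2$, so $\Phi$ is a continuous $g$-Bessel family.

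The main work is the lower bound, and here I would recycle the bijection from the proof of Theorem \ref{diss}(ii). Since $\Lambda$ and $\Theta$ are disjoint, $RangeT_\Lambda^*\cap RangeT_\Theta^*=\{0\}$ and $RangeT_\Lambda^*+RangeT_\Theta^*$ is closed, so the bounded linear map $L(F\oplus G)=F+G$ from $RangeT_\Lambda^*\oplus RangeT_\Theta^*$ onto the closed (hence complete) subspace $RangeT_\Lambda^*+RangeT_\Theta^*$ is a continuous bijection; by the open mapping theorem $L^{-1}$ is bounded. Writing $m=T_\Lambda^*L_1^*f\in RangeT_\Lambda^*$ and $n=T_\Theta^*L_2^*f\in RangeT_\Theta^*$ and using $\|m\|^2+\|n\|^2=\|L^{-1}(m+n)\|^2\le\|L^{-1}\|^2\|m+n\|^2$, I obtain $\|T_\Phi^*f\|^2\ge\|L^{-1}\|^{-2}\big(\|T_\Lambda^*L_1^*f\|^2+\|T_\Theta^*L_2^*f\|^2\big)$. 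This positive ``angle'' between the two ranges---which is exactly where the closedness hypothesis in the definition of disjointness is used---is the crux of the argument.

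Finally I would cash in the surjectivity hypothesis. Assume $L_1$ is surjective (the case of $L_2$ surjective being symmetric, using the $\Theta$-term instead). By Lemma \ref{snew} there is $L_1^\dagger\in B(\h)$ with $L_1L_1^\dagger=I$; taking adjoints gives $(L_1^\dagger)^*L_1^*=I$, whence $\|f\|\le\|L_1^\dagger\|\,\|L_1^*f\|$, i.e. $L_1^*$ is bounded below. Combining this with the lower frame inequality $\|T_\Lambda^*g\|^2\ge A_\Lambda\|g\|^2$ for $\Lambda$ and discarding the nonnegative $\Theta$-term yields $\|T_\Phi^*f\|^2\ge\|L^{-1}\|^{-2}A_\Lambda\|L_1^\dagger\|^{-2}\|f\|^2$, a strictly positive lower bound. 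Together with the Bessel estimate this shows $\Lambda L_1^*+\Theta L_2^*$ is a continuous $g$-frame. The only genuine obstacle is the lower bound via boundedness of $L^{-1}$; the surjectivity step is routine once Lemma \ref{snew} is invoked.
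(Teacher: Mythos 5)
Your proof is correct and takes essentially the same approach as the paper: surjectivity of $L_1$ yields a pseudo-inverse making $L_1^*$ bounded below, and disjointness supplies the lower frame bound through the boundedly invertible sum map on $RangeT_\Lambda^*\oplus RangeT_\Theta^*$. The only cosmetic difference is that the paper cites Theorem \ref{diss}(ii) as a black box, applying the frame inequality of $\Gamma_\omega(h\oplus k)=\Lambda_\omega h+\Theta_\omega k$ to the vector $L_1^*h\oplus L_2^*h$, whereas you inline the key inequality from that theorem's proof; the resulting estimates agree up to constants.
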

%%%%%%%%%%%%%%%%%%%%%%%%%%%%%%%%%%%%%%%%%%%%%%%%%%%%%%%%%%%%%%%%%%%%%%%%%%%%
\begin{proof}
Let $L_1$ is surjective, then by Lemma \ref{snew}, there exist $L_1^\dagger\in B(\h)$ such that $L_1L_1^\dagger=I,$ so $(L_1^\dagger )^*L_1^*=I.$ Hence for any $h\in\h$ we have 
\begin{align*}
\|h\|=\|(L_1^\dagger )^*L_1^*h\|\leq\|L_1^\dagger\|\|L_1^*h\|,
\end{align*}
thus
\begin{align*}
\|L_1^*h\|\geq\frac{\|h\|}{\|L_1^\dagger\|}.
\end{align*}
Suppose $\Gamma_\omega(h\oplus k)=\Lambda_\omega h+\Theta_\omega k,$ for all $h,k\in\h$ and for all $\omega\in\Omega.$ By part (ii) of Theorem \ref{diss}, $\Gamma=\{\Gamma_\omega\in B(\h\oplus\h):\omega\in\Omega\}$ is a continuous $g$-frame. Therefore, for any $h\in\h$ we have
\begin{align*}
\frac{A_\Gamma}{\|L_1^\dagger\|^2}\|h\|^2
\leq A_\Gamma\|L_1^* h\|^2
&\leq A_\Gamma\big(\|L_1^* h\|^2+\|L_2^* h\|^2\big)
\\&= A_\Gamma\|L_1^* h\oplus L_2^* h\|^2 
\\&\leq\int_\Omega\|\Gamma_\omega(L_1^* h\oplus L_2^* h)\|^2 d\mu(\omega)
%\\&= \int_\Omega\|\Lambda_\omega L_1 h+\Theta_\omega L_2 h\|^2 d\mu(\omega)
\\&\leq B_\Gamma\|L_1^* h\oplus L_2^* h\|^2
\\&\leq B_\Gamma\big(\|L_1^* h\|^2+\|L_2^* h\|^2\big)
\\&\leq 2B_\Gamma.\max\{\|L_1\|^2,\|L_2\|^2\}\|h\|^2.
\end{align*}
Therefore we have 
\begin{align*}
\frac{A_\Gamma}{\|L_1^\dagger\|^2}\|h\|^2&\leq \int_\Omega\|(\Lambda_\omega L_1+\Theta_\omega L_2 )h\|^2 d\mu(\omega)
\\&\leq 2B_\Gamma.\max\{\|L_1\|^2,\|L_2\|^2\}\|h\|^2,\quad h\in\h.
\end{align*}
The proof is similar whenever $L_2$ is surjective.
\end{proof}
%%%%%%%%%%%%%%%%%%%%%%%%%%%%%%%%%%%%%%%%%%%%%%%%%%%%%%%%%%%%%%%%%%%%%%%%%%%%
\begin{cor}
Let $\Lambda=\{\Lambda_\omega\in B(\h,\K_\omega):\omega\in\Omega\}$ and $\Theta=\{\Theta_\omega\in B(\h,\K_\omega):\omega\in\Omega\}$ be two disjoint continuous $g$-frames. Then $\Lambda+\Theta=\{\Lambda_\omega+\Theta_\omega\in B(\h,\K_\omega):\omega\in\Omega\}$ is a continuous $g$-frame.
\end{cor}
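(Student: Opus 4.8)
The plan is to recognize this corollary as the special case $L_1=L_2=I$ of Proposition \ref{ssnew}, where $I$ denotes the identity operator on $\h$. First I would observe that the identity operator is surjective (indeed invertible), so the hypothesis of Proposition \ref{ssnew} --- that at least one of $L_1,L_2$ be surjective --- is automatically satisfied. Next, since $I^*=I$, the family constructed in that proposition becomes
$$\Lambda L_1^*+\Theta L_2^*=\{\Lambda_\omega I^*+\Theta_\omega I^*:\omega\in\Omega\}=\{\Lambda_\omega+\Theta_\omega:\omega\in\Omega\}=\Lambda+\Theta.$$
Proposition \ref{ssnew} then asserts directly that this family is a continuous $g$-frame, which is exactly the claim.

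There is essentially no obstacle here beyond the substitution: the genuine content --- building the auxiliary continuous $g$-frame $\Gamma$ on $\h\oplus\h$ from the disjointness hypothesis by part (ii) of Theorem \ref{diss}, and extracting frame bounds through the pseudo-inverse estimate of Lemma \ref{snew} --- has already been carried out in the proof of Proposition \ref{ssnew}. If one wished to record explicit bounds, one could note that the pseudo-inverse of $I$ is $I$ itself, so $\|I^\dagger\|=1$, which specializes the bounds of Proposition \ref{ssnew} to the lower bound $A_\Gamma$ and the upper bound $2B_\Gamma$, where $A_\Gamma,B_\Gamma$ are the continuous $g$-frame bounds of $\Gamma$. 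This computation is not needed for the qualitative statement, so I would simply invoke the proposition with $L_1=L_2=I$ and conclude.
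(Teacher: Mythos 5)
Your proposal is correct and is exactly the paper's own proof: the paper likewise disposes of this corollary by taking $L_1=L_2=I$ in Proposition \ref{ssnew}. Your extra observations (surjectivity of $I$, $I^\dagger=I$, and the resulting bounds $A_\Gamma$ and $2B_\Gamma$) are accurate specializations but add nothing beyond what the paper's one-line argument already contains.
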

%%%%%%%%%%%%%%%%%%%%%%%%%%%%%%%%%%%%%%%%%%%%%%%%%%%%%%%%%%%%%%%%%%%%%%%%%%%%
\begin{proof}
By considering $L_1=L_2=I$ in Proposition \ref{ssnew} the proof is completed.
%Let $\Gamma_\omega(h\oplus k)=\Lambda_\omega h+\Theta_\omega k,$ for all $h,k\in\h$ and for all $\omega\in\Omega.$ By part (ii) of Theorem \ref{diss}, $\Gamma=\{\Gamma_\omega\in B(\h\oplus\h):\omega\in\Omega\}$ is a continuous $g$-frame. Therefore, for any $h\in\h$ we have
%\begin{eqnarray*}
%2A_\Gamma\|h\|^2 &\leq& \int_\Omega\|\Lambda_\omega h+\Theta_\omega h\|^2 d\mu(\omega)
%\\&=&\int_\Omega\|\Gamma_\omega(h\oplus h)\|^2 d\mu(\omega)\leq 2B_\Gamma\|h\|^2.
%\end{eqnarray*}
\end{proof}
%%%%%%%%%%%%%%%%%%%%%%%%%%%%%%%%%%%%%%%%%%%%%%%%%%%%%%%%%%%%%%%%%%%%%%%%%%%%%
In the following results, we construct a continuous $g$-frame by strongly disjoint continuous $g$-frames by generalizing a result from \cite{HanL}.
\begin{prop}
Let $\Lambda=\{\Lambda_\omega \in B(\h, \K_{\omega}) : \omega \in \Omega \}$ and $\Theta=\{\Theta_\omega \in B(\h, \K_{\omega}) : \omega \in \Omega \}$ be strongly disjoint continuous $g$-frames and let $L_1, L_2\in B(\h)$ such that $L_1^* L_1+L_2^* L_2=AI $ for some $A>0$. Then $\Lambda L_1+\Theta L_2=\{\Lambda_\omega L_1+\Theta_\omega L_2\in B(\h,\K_\omega) : \omega\in\Omega\}$ is a continuous $g$-frame. In particular, $\alpha\Lambda+\beta\Theta=\{\alpha\Lambda_\omega+\beta\Theta_\omega\in B(\h,\K_\omega) : \omega \in \Omega\}$ is a continuous $g$-frame for $\alpha,\beta \in \mathbb{C}$ with $|\alpha|^2+|\beta|^2>0.$
\end{prop}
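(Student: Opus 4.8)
The plan is to compute the continuous $g$-frame quadratic form of $\Lambda L_1+\Theta L_2$ directly and trap it between positive multiples of $\|f\|^2$. Fixing $f\in\h$, I would first expand the integrand pointwise,
\[
\|(\Lambda_\omega L_1+\Theta_\omega L_2)f\|^2=\|\Lambda_\omega L_1 f\|^2+\|\Theta_\omega L_2 f\|^2+2\,\mathrm{Re}\,\langle \Lambda_\omega L_1 f,\Theta_\omega L_2 f\rangle,
\]
and then integrate. The crucial observation is that the integrated cross term vanishes: by Proposition \ref{combi} we have $(T_\Lambda^* g)(\omega)=\Lambda_\omega g$ and $(T_\Theta^* g)(\omega)=\Theta_\omega g$, so
\[
\int_\Omega \langle \Lambda_\omega L_1 f,\Theta_\omega L_2 f\rangle\, d\mu(\omega)=\langle T_\Lambda^*(L_1 f),T_\Theta^*(L_2 f)\rangle,
\]
and since $L_1 f,L_2 f\in\h$ while strong disjointness means $\mathrm{Range}\,T_\Lambda^*\perp \mathrm{Range}\,T_\Theta^*$, this inner product is zero. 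This is the linchpin of the argument.

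With the cross term eliminated, the integral reduces to $\int_\Omega\|\Lambda_\omega L_1 f\|^2\,d\mu+\int_\Omega\|\Theta_\omega L_2 f\|^2\,d\mu$. Applying the frame inequalities for $\Lambda$ and $\Theta$ to the vectors $L_1 f$ and $L_2 f$ respectively bounds this quantity between $\min\{A_\Lambda,A_\Theta\}(\|L_1 f\|^2+\|L_2 f\|^2)$ and $\max\{B_\Lambda,B_\Theta\}(\|L_1 f\|^2+\|L_2 f\|^2)$. The hypothesis $L_1^* L_1+L_2^* L_2=AI$ is now exactly what is needed, since $\|L_1 f\|^2+\|L_2 f\|^2=\langle (L_1^* L_1+L_2^* L_2)f,f\rangle=A\|f\|^2$. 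Substituting converts both sides into multiples of $\|f\|^2$, yielding
\[
A\min\{A_\Lambda,A_\Theta\}\,\|f\|^2\leq\int_\Omega\|(\Lambda_\omega L_1+\Theta_\omega L_2)f\|^2\,d\mu(\omega)\leq A\max\{B_\Lambda,B_\Theta\}\,\|f\|^2.
\]
Because $A>0$ and the frame bounds are strictly positive, the lower constant is strictly positive; measurability of $\{(\Lambda_\omega L_1+\Theta_\omega L_2)f:\omega\in\Omega\}$ follows from that of $\{\Lambda_\omega L_1 f\}$ and $\{\Theta_\omega L_2 f\}$, so $\Lambda L_1+\Theta L_2$ is a continuous $g$-frame. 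For the final clause I would specialize to $L_1=\alpha I$ and $L_2=\beta I$, for which $L_1^* L_1+L_2^* L_2=(|\alpha|^2+|\beta|^2)I$, and the condition $|\alpha|^2+|\beta|^2>0$ guarantees $A>0$.

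I do not expect a serious obstacle here; the proof is essentially a direct computation. The only point requiring care is the vanishing of the cross term: one must apply strong disjointness to the transformed vectors $L_1 f$ and $L_2 f$ rather than to $f$ itself, and recognize the integral as precisely the $\widehat{\K}$-inner product of elements of $\mathrm{Range}\,T_\Lambda^*$ and $\mathrm{Range}\,T_\Theta^*$. After that, verifying that the lower bound remains strictly positive (which relies on $A>0$ together with $A_\Lambda,A_\Theta>0$) is routine.
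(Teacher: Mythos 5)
Your proposal is correct and follows essentially the same argument as the paper: strong disjointness (orthogonality of $\mathrm{Range}\,T_\Lambda^*$ and $\mathrm{Range}\,T_\Theta^*$) kills the cross term, and the identity $L_1^*L_1+L_2^*L_2=AI$ converts $\|L_1f\|^2+\|L_2f\|^2$ into $A\|f\|^2$ for the lower bound. The only cosmetic differences are that the paper writes the integral as the $\widehat{\K}$-norm $\|T_\Lambda^*L_1h+T_\Theta^*L_2h\|^2$ and invokes Pythagoras directly, and it uses the cruder upper bound $\big(B_\Lambda\|L_1\|^2+B_\Theta\|L_2\|^2\big)\|h\|^2$ where you use $A\max\{B_\Lambda,B_\Theta\}\|f\|^2$; both are valid.
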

%%%%%%%%%%%%%%%%%%%%%%%%%%%%%%%%%%%%%%%%%%%%%%%%%%%%%%%%%%%%%%%%%%%%%%%%%%%%
\begin{proof}
For any $h\in \h$ we have
\begin{align*}
\int_\Omega \|(\Lambda_\omega L_1+\Theta_\omega L_2)h\|^2 d\mu(\omega)&=\|T_\Lambda^*L_1 h+T_\Theta^*L_2 h\|^2\\&=\|T_\Lambda^*L_1 h\|^2+\|T_\Theta^*L_2 h\|^2,
\end{align*}
and
\begin{align*}
\big(B_\Lambda \|L_1\|^2+B_\Theta \|L_2\|^2\big)\|h\|^2 &\ge\|T_\Lambda^*L_1 h\|^ 2+\|T_\Theta^*L_2 h\|^ 2\\&\ge \min\{A_\Lambda, A_\Theta\}.\big(\|L_1 h\|^2+\|L_2 h\|^2\big) \\&=\min\{A_\Lambda, A_\Theta\}.\big\langle (L_1^* L_1+L_2^* L_2)h, h \big\rangle\\&=A.\min\{A_\Lambda, A_\Theta\}\|h\|^2.
\end{align*}
By taking $L_1=\alpha I$ and $L_2=\beta I$ the particular case is obvious.
\end{proof}
%%%%%%%%%%%%%%%%%%%%%%%%%%%%%%%%%%%%%%%%%%%%%%%%%%%%%%%%%%%%%%%%%%%%%%%%%%%%
\begin{prop}
Let $\Lambda=\{\Lambda_\omega \in B(\h, \K_{\omega}) : \omega \in \Omega \}$ and $\Theta=\{\Theta_\omega \in B(\h, \K_{\omega}) : \omega \in \Omega \}$ be strongly disjoint Parseval continuous $g$-frames and let $L_1,L_2\in B(\h)$. Then $L_1^*L_1+L_2^*L_2=AI$ for some $A>0$ if and only if $\Lambda L_1+\Theta L_2=\{\Lambda_\omega L_1+\Theta_\omega L_2\in B(\h,\K_\omega) : \omega\in \Omega\}$ is a tight continuous $g$-frame with bound $A$. In particular, $\alpha\Lambda+\beta\Theta=\{\alpha\Lambda_\omega+\beta\Theta_\omega\in B(\h,\K_\omega) : \omega \in \Omega\}$ is a tight continuous $g$-frame if and only if $|\alpha|^2+|\beta|^2>0$ for $\alpha,\beta \in \mathbb{C}.$
\end{prop}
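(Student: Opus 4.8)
The plan is to use the key identity established in the previous proposition: since $\Lambda$ and $\Theta$ are strongly disjoint, we have $Range T_\Lambda^* \perp Range T_\Theta^*$, so for any $h\in\h$,
\begin{align*}
\int_\Omega \|(\Lambda_\omega L_1+\Theta_\omega L_2)h\|^2\, d\mu(\omega)=\|T_\Lambda^* L_1 h\|^2+\|T_\Theta^* L_2 h\|^2.
\end{align*}
Here the additional hypothesis that both $\Lambda$ and $\Theta$ are \emph{Parseval} is what makes the reverse direction possible: being Parseval means $S_\Lambda=S_\Theta=I$, equivalently $\|T_\Lambda^* f\|^2=\|f\|^2$ and $\|T_\Theta^* f\|^2=\|f\|^2$ for all $f$ in the respective spaces. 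Substituting this into the displayed identity collapses it to
\begin{align*}
\int_\Omega \|(\Lambda_\omega L_1+\Theta_\omega L_2)h\|^2\, d\mu(\omega)=\|L_1 h\|^2+\|L_2 h\|^2=\big\langle (L_1^* L_1+L_2^* L_2)h,\, h\big\rangle.
\end{align*}
This single exact equation is the engine for both implications.

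For the forward direction, I would assume $L_1^* L_1+L_2^* L_2=AI$ and read off directly from the last display that the right-hand side equals $A\|h\|^2$, so the left-hand side equals $A\|h\|^2$ for every $h$, which is exactly the statement that $\Lambda L_1+\Theta L_2$ is a tight continuous $g$-frame with bound $A$ (one should note $A>0$ forces the family to be a genuine frame, not merely Bessel). This half is essentially immediate from the previous proposition specialized to the Parseval case, so I would keep it short.

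For the converse, I would assume $\Lambda L_1+\Theta L_2$ is tight with bound $A$, meaning the left-hand side of the last display equals $A\|h\|^2=\langle A I\, h, h\rangle$ for all $h$. Combined with the identity, this gives $\langle (L_1^* L_1+L_2^* L_2)h, h\rangle=\langle A I\, h, h\rangle$ for every $h\in\h$. The main (and only genuine) step is then to pass from equality of the two associated quadratic forms to equality of the operators themselves: since $L_1^* L_1+L_2^* L_2-AI$ is a bounded self-adjoint operator whose quadratic form vanishes identically, it must be the zero operator, yielding $L_1^* L_1+L_2^* L_2=AI$. I expect this polarization/positivity argument to be the crux, though it is standard over a complex Hilbert space; $A>0$ comes for free since it is the frame bound of a nonzero frame.

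Finally, for the particular case, I would substitute $L_1=\alpha I$ and $L_2=\beta I$, so that $L_1^* L_1+L_2^* L_2=(|\alpha|^2+|\beta|^2)I$, and observe that this is of the form $AI$ with $A=|\alpha|^2+|\beta|^2>0$ precisely when $|\alpha|^2+|\beta|^2>0$; the equivalence then follows immediately from the general statement with $A=|\alpha|^2+|\beta|^2$.
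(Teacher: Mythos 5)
Your proposal is correct and follows essentially the same route as the paper: both rest on the exact identity $\int_\Omega \|(\Lambda_\omega L_1+\Theta_\omega L_2)h\|^2\, d\mu(\omega)=\|T_\Lambda^*L_1h\|^2+\|T_\Theta^*L_2h\|^2=\langle (L_1^*L_1+L_2^*L_2)h,h\rangle$, obtained from strong disjointness (Pythagorean splitting) plus the Parseval property, with the particular case handled by $L_1=\alpha I$, $L_2=\beta I$. Your only addition is spelling out the polarization step (equality of quadratic forms of a bounded self-adjoint operator forces operator equality) for the converse, which the paper leaves implicit.
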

%%%%%%%%%%%%%%%%%%%%%%%%%%%%%%%%%%%%%%%%%%%%%%%%%%%%%%%
\begin{proof}
For any $h\in \h$ we have
\begin{align*}
\int_\Omega \|(\Lambda_\omega L_1+\Theta_\omega L_2)h\|^2 d\mu(\omega)
 &=\|T_\Lambda^*L_1 h+T_\Theta^*L_2 h\|^2
 \\&=\|T_\Lambda^*L_1 h\|^ 2+\|T_\Theta^*L_2 h\|^ 2
 \\&=\|L_1 h\|^ 2+\|L_2 h\|^2
 \\&=\big\langle (L_1^*L_1+L_2^*L_2)h,h\big\rangle
=A\|h\|^2.
\end{align*}
In particular, it is sufficient to take $L_1=\alpha I$ and $L_2=\beta I$.
\end{proof}
%%%%%%%%%%%%%%%%%%%%%%%%%%%%%%%%%%%%%%%%%%%%%%%%%%%%%%%%%%%%%%%%%%%%%%%%%%%%
 \begin{cor}
Let $\Lambda=\{\Lambda_\omega \in B(\h, \K_{\omega}) : \omega \in \Omega \}$ and $\Theta=\{\Theta_\omega \in B(\h, \K_{\omega}) : \omega \in \Omega \}$ be strongly disjoint Parseval continuous $g$-frames and let $L_1,L_2\in B(\h).$ Then $L_1^*L_1+L_2^*L_2=I$ if and only if $\Lambda L_1+\Theta L_2=\{\Lambda_\omega L_1+\Theta_\omega L_2\in B(\h,\K_\omega) : \omega\in \Omega\}$ is a Parseval continuous $g$-frame. In particular, $\alpha\Lambda+\beta\Theta=\{\alpha\Lambda_\omega+\beta\Theta_\omega\in B(\h,\K_\omega) : \omega \in \Omega\}$ is a Parseval continuous $g$-frame if and only if $|\alpha|^2+|\beta|^2=1$ for $\alpha,\beta \in \mathbb{C}$.
\end{cor}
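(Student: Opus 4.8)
The plan is to obtain this corollary as the special case $A=1$ of the preceding proposition, since a Parseval continuous $g$-frame is by definition exactly a tight continuous $g$-frame whose common bound is $1$. First I would invoke that proposition: it states that $L_1^*L_1+L_2^*L_2=AI$ for some $A>0$ holds if and only if $\Lambda L_1+\Theta L_2$ is a tight continuous $g$-frame with bound $A$. Setting $A=1$ immediately gives the equivalence between $L_1^*L_1+L_2^*L_2=I$ and $\Lambda L_1+\Theta L_2$ being a Parseval continuous $g$-frame, which is the whole assertion.

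If instead one prefers a self-contained argument, the short computation behind the proposition can be repeated. For any $h\in\h$, Proposition \ref{combi} and the identity $(T_\Lambda^*f)(\omega)=\Lambda_\omega f$ give
\begin{align*}
\int_\Omega \|(\Lambda_\omega L_1+\Theta_\omega L_2)h\|^2\,d\mu(\omega)=\|T_\Lambda^*L_1 h+T_\Theta^*L_2 h\|^2.
\end{align*}
Strong disjointness ($RangeT_\Lambda^*\bot RangeT_\Theta^*$) kills the cross term, reducing this to $\|T_\Lambda^*L_1 h\|^2+\|T_\Theta^*L_2 h\|^2$, and the Parseval hypothesis on $\Lambda$ and $\Theta$ turns each summand into $\|L_1 h\|^2$ and $\|L_2 h\|^2$; hence the expression equals $\langle (L_1^*L_1+L_2^*L_2)h,h\rangle$. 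Thus $\Lambda L_1+\Theta L_2$ is Parseval exactly when $\langle (L_1^*L_1+L_2^*L_2)h,h\rangle=\|h\|^2$ for all $h\in\h$.

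The only step needing a word of justification is the converse of this last equivalence: from $\langle (L_1^*L_1+L_2^*L_2-I)h,h\rangle=0$ for every $h\in\h$ one concludes $L_1^*L_1+L_2^*L_2=I$, which is immediate because the operator is self-adjoint and, on a complex Hilbert space, a vanishing quadratic form forces the operator to be zero. I expect no real obstacle here. Finally, for the particular case I would take $L_1=\alpha I$ and $L_2=\beta I$, so that $L_1^*L_1+L_2^*L_2=(|\alpha|^2+|\beta|^2)I$; this equals $I$ precisely when $|\alpha|^2+|\beta|^2=1$, yielding the stated criterion for $\alpha\Lambda+\beta\Theta$.
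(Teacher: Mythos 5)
Your proposal is correct and takes essentially the same route as the paper: the corollary appears there with no separate proof precisely because it is the case $A=1$ of the preceding proposition, which is exactly what you invoke. Your optional self-contained computation also reproduces the paper's proof of that proposition (with the polarization argument for the converse, which the paper leaves implicit, spelled out correctly).
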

%%%%%%%%%%%%%%%%%%%%%%%%%%%%%%%%%%%%%%%%%%%%%%%%%%%%%%%%%%%%%%%%%%%%%%%%%%%%
%\begin{proof}
%For any $h\in \h$ we have
%\begin{eqnarray*}
%\int_\Omega \|(\Lambda_\omega L_1+\Theta_\omega L_2)(h)\|^2 d\mu(\omega) &=&\|T_\Lambda^*L_1 h+T_\Theta^*L_2 h\|^2\\&=&\|T_\Lambda^*L_1 h\|^ %2+\|T_\Theta^*L_2 h\|^ 2\\&=&\|L_1 h\|^ 2+\|L_2 h\|^2 \\&=&\langle (L_1^*L_1+L_2^*L_2)(h),h\rangle=\|h\|^2.
%\end{eqnarray*}
%In particular, it is sufficient to take $L_1=\alpha I$ and $L_2=\beta I$.
%\end{proof}
%%%%%%%%%%%%%%%%%%%%%%%%%%%%%%%%%%%%%%%%%%%%%%%%%%%%%%%%%%%%%%%%%%%%%%%%%%%%
Now, to get a dual continuous $g$-frames by strongly disjoint continuous $g$-frames we generalize results of \cite{ostad2} and \cite{newnewnew}.
\begin{prop}\label{new}
Let $\Lambda=\{\Lambda_\omega \in B(\h, \K_{\omega}) : \omega \in \Omega \}$ and $\Psi=\{\Psi_\omega \in B(\K, \K_{\omega}) : \omega \in \Omega \}$ be duals of continuous $g$-frames $\Theta=\{\Theta_\omega \in B(\h,\K_\omega) : \omega \in \Omega \}$ and $\Phi=\{\Phi_\omega \in B(\K,\K_\omega) : \omega \in \Omega \}$, respectively. If $\Lambda$, $\Phi$ and $\Theta$, $\Psi$  are strongly disjoint. Then $\Gamma=\{\Gamma_\omega \in B(\h\oplus \K,\K_\omega) : \omega \in \Omega \}$  and $\Delta=\{\Delta_\omega \in B(\h\oplus \K,\K_\omega) : \omega \in \Omega \}$ are dual continuous $g$-frames, where
\begin{eqnarray*}
\Gamma_\omega(h\oplus k)=\Lambda_\omega h+\Psi_\omega k,\quad\Delta_\omega(h\oplus k)=\Theta_\omega h+\Phi_\omega k,
\end{eqnarray*}
for all $\omega\in \Omega$ and for all $h\in \h$, $k\in \K.$
%Moreover, if $\Gamma$ is a Riesz-type continuous $g$-frame for $\h\oplus \K$, then $\Delta$ is a Riesz-type continuous %$g$-frame for $\h\oplus \K$.
\end{prop}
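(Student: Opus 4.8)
The plan is to verify the two requirements in the definition of dual continuous $g$-frames directly: that $\Gamma$ and $\Delta$ are themselves continuous $g$-frames, and that they satisfy the duality relation
$$\langle F,G\rangle=\int_\Omega\langle\Delta_\omega F,\Gamma_\omega G\rangle\,d\mu(\omega),\quad F,G\in\h\oplus\K.$$
Since duality of continuous $g$-frames is symmetric (take complex conjugates in the defining integral), I may freely read the hypotheses as: $\Lambda$ is a dual of $\Theta$, $\Psi$ is a dual of $\Phi$, the pair $\Lambda,\Phi$ is strongly disjoint, and the pair $\Theta,\Psi$ is strongly disjoint.

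First I would check that $\Gamma$ and $\Delta$ are continuous $g$-Bessel families. Each of $\Lambda,\Psi,\Theta,\Phi$ is a continuous $g$-frame, hence Bessel, and for $h\oplus k\in\h\oplus\K$ the elementary bound $\|\Lambda_\omega h+\Psi_\omega k\|^2\le 2\|\Lambda_\omega h\|^2+2\|\Psi_\omega k\|^2$ integrated against $d\mu$ gives $\int_\Omega\|\Gamma_\omega(h\oplus k)\|^2\,d\mu(\omega)\le 2\max\{B_\Lambda,B_\Psi\}\|h\oplus k\|^2$, and symmetrically for $\Delta$. Strong measurability is inherited from the constituent families.

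The core of the argument is the duality relation. Writing $F=f_1\oplus f_2$, $G=g_1\oplus g_2$ and expanding $\langle\Delta_\omega F,\Gamma_\omega G\rangle=\langle\Theta_\omega f_1+\Phi_\omega f_2,\Lambda_\omega g_1+\Psi_\omega g_2\rangle$, integration produces four terms. The two diagonal terms are $\int_\Omega\langle\Theta_\omega f_1,\Lambda_\omega g_1\rangle\,d\mu=\langle f_1,g_1\rangle$ and $\int_\Omega\langle\Phi_\omega f_2,\Psi_\omega g_2\rangle\,d\mu=\langle f_2,g_2\rangle$, since $\Lambda$ is a dual of $\Theta$ and $\Psi$ is a dual of $\Phi$. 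The two cross terms vanish: recalling from Proposition \ref{combi} that $(T_\Theta^* f_1)(\omega)=\Theta_\omega f_1$ and $(T_\Psi^* g_2)(\omega)=\Psi_\omega g_2$, and that the inner product of $\widehat{\K}$ is the integral, I get $\int_\Omega\langle\Theta_\omega f_1,\Psi_\omega g_2\rangle\,d\mu=\langle T_\Theta^* f_1,T_\Psi^* g_2\rangle_{\widehat{\K}}=0$ because $\Theta,\Psi$ are strongly disjoint, i.e. $RangeT_\Theta^*\bot RangeT_\Psi^*$; likewise $\int_\Omega\langle\Phi_\omega f_2,\Lambda_\omega g_1\rangle\,d\mu=0$ because $\Lambda,\Phi$ are strongly disjoint. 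Summing the four terms yields $\langle f_1,g_1\rangle+\langle f_2,g_2\rangle=\langle F,G\rangle$.

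Finally, to upgrade ``Bessel'' to ``frame'' I would feed the duality relation back through Cauchy--Schwarz: for $F\in\h\oplus\K$,
$$\|F\|^2=\int_\Omega\langle\Delta_\omega F,\Gamma_\omega F\rangle\,d\mu(\omega)\le\sqrt{B_\Delta}\,\|F\|\Big(\int_\Omega\|\Gamma_\omega F\|^2\,d\mu(\omega)\Big)^{1/2},$$
so that $\int_\Omega\|\Gamma_\omega F\|^2\,d\mu(\omega)\ge B_\Delta^{-1}\|F\|^2$, giving a lower frame bound for $\Gamma$; the symmetric estimate gives one for $\Delta$. Hence both $\Gamma$ and $\Delta$ are continuous $g$-frames, and with the relation already established they are dual continuous $g$-frames. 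I expect the only points needing care to be the bookkeeping of which pair is strongly disjoint for each cross term (it is $\Theta,\Psi$ for one and $\Lambda,\Phi$ for the other, not the ``same-side'' pairs), and the observation that the lower frame bound is not automatic but must be extracted from the duality relation via Cauchy--Schwarz.
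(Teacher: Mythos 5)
Your proof is correct and follows essentially the same route as the paper's: the identical four-term expansion of the integrated inner product, with the diagonal terms evaluated via the two duality hypotheses and the cross terms annihilated by strong disjointness, correctly matched to the pairs $\Lambda,\Phi$ and $\Theta,\Psi$. The only difference is that where the paper closes by citing Proposition 3.2 of \cite{ostad} (two continuous $g$-Bessel families satisfying the duality relation are dual continuous $g$-frames), you reprove that fact inline with the Cauchy--Schwarz estimate, which makes your argument self-contained but does not change the approach.
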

%%%%%%%%%%%%%%%%%%%%%%%%%%%%%%%%%%%%%%%%%%%%%%%%%%%%%%%
\begin{proof}
It is clear that $\Gamma$ and $\Delta$ are continuous $g$-Bessel families for $\h\oplus \K$. For any $h_1,h_2\in\h$ and $k_1,k_2\in \K$, we have
\begin{align*}
\int_\Omega\Big\langle \Gamma_\omega(h_1\oplus k_1)&,\Delta_\omega(h_2\oplus k_2)\Big\rangle d\mu(\omega)\\&=\int_\Omega\langle\Lambda_\omega h_1,\Theta_\omega h_2 \rangle d\mu(\omega)+\int_\Omega \langle\Lambda_\omega h_1,\Phi_\omega k_2 \rangle d\mu(\omega)
\\&+\int_\Omega\langle\Psi_\omega k_1,\Theta_\omega h_2 \rangle d\mu(\omega) +\int_\Omega\langle\Psi_\omega k_1,\Phi_\omega k_2 \rangle d\mu(\omega)
\\&=\langle h_1,h_2\rangle+\langle k_1,k_2 \rangle
\\&=\langle h_1\oplus k_1,h_2\oplus k_2 \rangle.
\end{align*}
Thus by Proposition 3.2 of \cite{ostad}, $\Gamma$ and $\Delta$ are dual continuous $g$-frames for $\h\oplus \K$.
%Moreover, if $\Gamma$ is a Riesz-type continuous $g$-frame for $\h\oplus \K,$ then $\Gamma$ has only one dual that is %canonical dual of $\Gamma$. Therefore, $\Delta$ is canonical dual of $\Gamma$ and so %$RangeT^*_\Delta=RangeT^*_\Gamma=\hat{\K}$. By Theorem \ref{riezs}, the proof is completed.
\end{proof}
%%%%%%%%%%%%%%%%%%%%%%%%%%%%%%%%%%%%%%%%%%%%%%%%%%%%%%%%%%%%%%%%%%%%%%%%%%%%
\begin{exa}
Let $F:\Omega\rightarrow \h$ and $G:\Omega\rightarrow \K$ be two continuous frames. We define two families of bounded operators $\Lambda=\{\Lambda_{\omega}\in B(\h,\mathbb{C}^2):\omega\in\Omega\}$ and $\Theta=\{\Theta_\omega\in B(\h,\mathbb{C}^2):\omega\in \Omega\}$ where
\begin{eqnarray*}
\Lambda_{\omega} f=\big(\langle f,F(\omega)\rangle_\h,0\big)\quad, \Theta_{\omega} f=\big(\langle S_F^{-1}f,F(\omega)\rangle_\h,0\big),
\end{eqnarray*}
for all $f\in\h$, $\omega\in\Omega.$ It is obvious that $\Lambda$ and $\Theta$ are continuous $g$-frames. Also for any $f,g\in \h$
\begin{eqnarray*}
\int_\Omega \langle \Theta_\omega f, \Lambda_\omega g\rangle d\mu(\omega)
=\int_\Omega \big\langle f, S^{-1}_F F(\omega)\big\rangle \langle F(\omega),g\rangle d\mu(\omega)
=\langle f,g \rangle.
\end{eqnarray*}
So, $\Lambda$ and $\Theta$ are dual continuous $g$-frames. We also define two families of bounded operators $\Phi=\{\Phi_\omega\in B(\K,\mathbb{C}^2):\omega\in \Omega\}$ and $\Psi=\{\Psi_{\omega}\in B(\K,\mathbb{C}^2):\omega\in\Omega\}$ where
\begin{eqnarray*}
\Phi_{\omega} g=\big(0,\langle S_G^{-1}g ,G(\omega)\rangle_\K\big),\quad \Psi_{\omega} g=\big(0,\langle g,G(\omega)\rangle_\K\big),
\end{eqnarray*}
for all $g\in \K$, $\omega\in\Omega.$ Similarly, $\Psi$ and $\Phi$ are dual continuous $g$-frames.
On the other hand, for any $f\in \h$  and $g\in\K$
\begin{eqnarray*}
\langle T^*_\Lambda f,T^*_{\Phi} g\rangle=\int_\Omega\langle \Lambda_\omega f,\Phi_\omega g\rangle d\mu(\omega)=0,
\end{eqnarray*}
so $\Lambda$ and $\Phi$ are strongly disjoint. Also, $\Theta$ and $\Psi$ are strongly disjoint.
Let us consider
\begin{align*}
\Gamma_\omega:\h\oplus\K\rightarrow\mathbb{C}^2,\quad\Gamma_{\omega}(f\oplus g)=\big(\langle f,F(\omega)\rangle_\h,\langle g,G(\omega)\rangle_\K\big),
\end{align*}
and
\begin{align*}
\Delta_\omega:\h\oplus\K\rightarrow\mathbb{C}^2,\quad\Delta_{\omega}(f\oplus g)=\big(\langle S_F^{-1}f,F(\omega)\rangle_\h,\langle S_G^{-1}g,G(\omega)\rangle_\K\big),
\end{align*}
for all $f\in \h$ and $g\in \K.$ Then
\begin{align*}
\int_\Omega \big\langle\Gamma_\omega(h_1\oplus k_1)&,\Delta_\omega(h_2\oplus k_2)\big\rangle d\mu(\omega)
\\&=\int_\Omega\langle h_1,F(\omega)\rangle_\h \langle S^{-1}_F F(\omega),  h_2\rangle_\h d\mu(\omega)
\\&+\int_\Omega\langle k_1,G(\omega)\rangle_\K \langle S^{-1}_G G(\omega), k_2\rangle_\K d\mu(\omega)
\\&=\langle h_1,h_2 \rangle+\langle k_1,k_2 \rangle
\\&=\langle h_1\oplus k_1,h_2\oplus k_2\rangle.
\end{align*}
Which Proposition \ref{new} confirm this result.
\end{exa}
%%%%%%%%%%%%%%%%%%%%%%%%%%%%%%%%%%%%%%%%%%%%%%%%%%%%%%%%%%%%%%%%%%%%%%%%%%%%
\begin{prop}\label{guoguo}
Let $\Lambda=\{\Lambda_\omega\in B(\h,\K_\omega):\omega\in\Omega\}$ and $\Theta=\{\Theta_\omega\in B(\h,\K_\omega):\omega\in\Omega\}$ be two strongly disjoint continuous $g$-frames and $L_1,L_2\in B(\h).$ If $L_1$ is surjective, then
$\widetilde{\Lambda}L_1^\dagger=\{\Lambda_\omega S_\Lambda^{-1} L_1^\dagger\in B(\h,\K_\omega):\omega\in\Omega\}$ is a dual for both 
$\Lambda L_1^*=\{\Lambda_\omega L_1^*\in B(\h,\K_\omega):\omega\in\Omega\}$ and
 $\Lambda L_1^*+\Theta L_2^*=\{\Lambda_\omega L_1^*+\Theta_\omega L_2^*\in B(\h,\K_\omega):\omega\in\Omega\}$.
\end{prop}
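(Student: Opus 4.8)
The plan is to exploit the identity $(L_1^\dagger)^*L_1^*=I$ coming from Lemma \ref{snew}, together with the weak definition of the frame operator $S_\Lambda$ from Proposition \ref{s} and the strong disjointness of $\Lambda$ and $\Theta$. First I would record the basic facts. Since $L_1$ is surjective, Lemma \ref{snew} provides $L_1^\dagger\in B(\h)$ with $L_1L_1^\dagger=I$, hence $(L_1^\dagger)^*L_1^*=I$; in particular both $L_1^*$ and $L_1^\dagger$ are bounded below. Because $\Lambda$ and $\Theta$ are strongly disjoint, $RangeT_\Lambda^*\bot RangeT_\Theta^*$; as the analysis operators of continuous $g$-frames are bounded below and thus have closed range, these two ranges are closed and orthogonal, so their intersection is $\{0\}$ and their sum is closed. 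Hence $\Lambda$ and $\Theta$ are disjoint, and Proposition \ref{ssnew} shows that $\Lambda L_1^*+\Theta L_2^*$ is a continuous $g$-frame. Composing the continuous $g$-frames $\Lambda$ and $\widetilde\Lambda$ on the right with the bounded-below operators $L_1^*$ and $L_1^\dagger$ shows likewise that $\Lambda L_1^*$ and $\widetilde\Lambda L_1^\dagger$ are continuous $g$-frames, so every family in the statement is a genuine continuous $g$-frame and only the two reproducing identities remain.

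Next I would verify that $\widetilde\Lambda L_1^\dagger$ is a dual of $\Lambda L_1^*$. Setting $A=(L_1^\dagger)^*S_\Lambda^{-1}$, so that $A^*=S_\Lambda^{-1}L_1^\dagger$, for $f,g\in\h$ I would compute
\begin{align*}
\int_\Omega\big\langle f,(\Lambda_\omega S_\Lambda^{-1}L_1^\dagger)^*\Lambda_\omega L_1^*g\big\rangle\, d\mu(\omega)
&=\int_\Omega\big\langle A^*f,\Lambda_\omega^*\Lambda_\omega(L_1^*g)\big\rangle\, d\mu(\omega)\\
&=\langle A^*f,S_\Lambda L_1^*g\rangle=\langle f,(L_1^\dagger)^*L_1^*g\rangle=\langle f,g\rangle,
\end{align*}
where the middle equality is the weak definition of $S_\Lambda$ applied to $L_1^*g$ and the last uses $(L_1^\dagger)^*L_1^*=I$. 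By the duality criterion (Proposition 3.2 of \cite{ostad}) this is exactly the assertion that $\widetilde\Lambda L_1^\dagger$ is a dual of $\Lambda L_1^*$.

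Finally, for $\Lambda L_1^*+\Theta L_2^*$ I would split
\begin{align*}
\int_\Omega\big\langle f,(\Lambda_\omega S_\Lambda^{-1}L_1^\dagger)^*(\Lambda_\omega L_1^*+\Theta_\omega L_2^*)g\big\rangle\, d\mu(\omega)
\end{align*}
into two integrals. The first coincides with the computation above and equals $\langle f,g\rangle$. The second, after moving adjoints across, becomes
\begin{align*}
\int_\Omega\big\langle\Lambda_\omega(S_\Lambda^{-1}L_1^\dagger f),\Theta_\omega(L_2^*g)\big\rangle\, d\mu(\omega)=\big\langle T_\Lambda^*(S_\Lambda^{-1}L_1^\dagger f),T_\Theta^*(L_2^*g)\big\rangle,
\end{align*}
which vanishes precisely because $RangeT_\Lambda^*\bot RangeT_\Theta^*$. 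Hence the whole integral equals $\langle f,g\rangle$, so $\widetilde\Lambda L_1^\dagger$ is a dual of $\Lambda L_1^*+\Theta L_2^*$ as well. The only genuine idea is this last cancellation: the cross term in the reproducing formula is annihilated exactly by strong disjointness, so that is where I expect the real content of the proposition to lie, the remainder being routine bookkeeping with $(L_1^\dagger)^*L_1^*=I$ and the weak-integral definition of $S_\Lambda$.
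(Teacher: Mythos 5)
Your proposal is correct and follows essentially the same route as the paper: split the duality integral into the $\Lambda$--$\Lambda$ term, evaluated via the frame operator identity $\int_\Omega\langle \Lambda_\omega u,\Lambda_\omega S_\Lambda^{-1}v\rangle\,d\mu(\omega)=\langle u,v\rangle$, and the cross term $\langle T_\Lambda^*(\cdot),T_\Theta^*(\cdot)\rangle$, which vanishes by strong disjointness, then finish with $L_1L_1^\dagger=I$. The only difference is cosmetic: you verify more carefully than the paper that all families involved are genuine continuous $g$-frames (the paper only remarks that they are Bessel), which if anything tightens the argument.
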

%%%%%%%%%%%%%%%%%%%%%%%%%%%%%%%%%%%%%%%%%%%%%%%%%%%%%%%%%%%%%%%%%%%%%%%%%%%%
\begin{proof}
It is obvious that $\widetilde{\Lambda}L_1^\dagger,$ $\Lambda L_1^*$ and $\Lambda L_1^*+\Theta L_2^*$ are continuous $g$-Bessel families. Since $L_1$ is surjective, then by Lemma \ref{snew} there exist $L_1^t\in B(\h,\K_\omega),$ such that $L_1L_1^t=I.$ For any $h,k\in\h$ we have 
\begin{align*}
\int_\Omega\big\langle(\Lambda_\omega L_1^*+\Theta_\omega L_2^*)h&,\Lambda_\omega S_\Lambda^{-1} L_1^\dagger k\big\rangle d\mu(\omega)
\\&=\int_\Omega\langle\Lambda_\omega L_1^*h,\Lambda_\omega S_\Lambda^{-1} L_1^\dagger k\rangle d\mu(\omega)
\\&+\int_\Omega\langle\Theta_\omega L_2^*h,\Lambda_\omega S_\Lambda^{-1} L_1^\dagger k\rangle d\mu(\omega)
\\&=\langle L_1^*h,L_1^\dagger k\rangle+\langle T_\Theta^* L_2^*h,T_\Lambda^* S_\Lambda^{-1} L_1^\dagger k\rangle
\\&=\langle h,L_1 L_1^\dagger k\rangle+0
=\langle h,k\rangle+0=\langle h,k\rangle.
\end{align*}
And also we have 
\begin{align*}
\int_\Omega\langle\Lambda_\omega L_1^*f,\Lambda_\omega S_\Lambda^{-1} L_1^\dagger g\rangle d\mu(\omega)
=\langle L_1^*f, L_1^\dagger g\rangle=\langle f,L_1 L_1^\dagger g\rangle=\langle f,g\rangle.
\end{align*}
Therefore the proof is completed.
\end{proof}
%%%%%%%%%%%%%%%%%%%%%%%%%%%%%%%%%%%%%%%%%%%%%
\begin{cor}
Let $\Lambda=\{\Lambda_\omega\in B(\h,\K_\omega):\omega\in\Omega\}$ and $\Theta=\{\Theta_\omega\in B(\h,\K_\omega):\omega\in\Omega\}$ be two strongly disjoint continuous $g$-frames. Then
$\widetilde{\Lambda}=\{\Lambda_\omega S_\Lambda^{-1}\in B(\h,\K_\omega):\omega\in\Omega\}$ is a dual for both 
$\Lambda=\{\Lambda_\omega\in B(\h,\K_\omega):\omega\in\Omega\}$ and
 $\Lambda+\Theta=\{\Lambda_\omega+\Theta_\omega\in B(\h,\K_\omega):\omega\in\Omega\}$.
\end{cor}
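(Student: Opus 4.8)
The plan is to derive this corollary as a direct specialization of Proposition \ref{guoguo}, taking $L_1=L_2=I$, where $I$ denotes the identity operator on $\h$. First I would check that the hypotheses of Proposition \ref{guoguo} are satisfied in this instance: $\Lambda$ and $\Theta$ are strongly disjoint continuous $g$-frames by assumption, $L_1,L_2=I\in B(\h)$, and $L_1=I$ is obviously surjective, so the surjectivity requirement is met.

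Next I would identify the relevant adjoint and pseudo-inverse. Since $I\cdot I=I$, the identity operator serves as its own pseudo-inverse in the sense of Lemma \ref{snew}, so $I^\dagger=I$; likewise $I^*=I$. Substituting these into the three families appearing in Proposition \ref{guoguo} collapses them onto the families named in the corollary:
\begin{align*}
\widetilde{\Lambda}L_1^\dagger=\{\Lambda_\omega S_\Lambda^{-1}\}=\widetilde{\Lambda},\quad
\Lambda L_1^*=\{\Lambda_\omega\}=\Lambda,\quad
\Lambda L_1^*+\Theta L_2^*=\{\Lambda_\omega+\Theta_\omega\}=\Lambda+\Theta.
\end{align*}

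With these identifications in place, the conclusion of Proposition \ref{guoguo}—that $\widetilde{\Lambda}L_1^\dagger$ is a dual continuous $g$-frame of both $\Lambda L_1^*$ and $\Lambda L_1^*+\Theta L_2^*$—transcribes verbatim into the statement that $\widetilde{\Lambda}$ is a dual of both $\Lambda$ and $\Lambda+\Theta$, which is exactly what is claimed. There is essentially no obstacle in this argument; the only point meriting a word of justification is the equality $I^\dagger=I$, which is immediate from $I\cdot I=I$. (Should one prefer a self-contained verification, one could instead repeat the computation in the proof of Proposition \ref{guoguo} with all operators $L_1,L_2,L_1^\dagger$ replaced by $I$, invoking the strong disjointness $Range\,T_\Lambda^*\bot Range\,T_\Theta^*$ to annihilate the cross term and the dual relation $\int_\Omega\langle \Lambda_\omega f,\Lambda_\omega S_\Lambda^{-1}g\rangle\,d\mu(\omega)=\langle f,g\rangle$ from (\ref{abcd}) to handle the diagonal term; but the specialization route is cleaner.)
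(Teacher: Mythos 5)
Your proposal is correct and matches the paper's own proof exactly: the paper also derives the corollary by taking $L_1=L_2=I$ in Proposition \ref{guoguo}. Your additional remarks on $I^\dagger=I$ and the optional self-contained verification are sound but not needed.
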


\begin{proof}
By considering $L_1=L_2=I$ in Proposition \ref{guoguo} the proof is completed.
\end{proof}
%%%%%%%%%%%%%%%%%%%%%%%%%%%%%%%%%%%%%%%%%%%%%%%%%%%%%%%%
%%%%%%%%%%%%%%%%%%%%%%%%%%%%%%%%%%%%%%%%%%%%%%%%%%%%%%%%%%%%%%%%%%%%%%%%%%%%
%\begin{prop}
%Let $\Lambda=\{\Lambda_\omega \in B(\h, \K_{\omega}) : \omega \in \Omega \}$ and $\Theta=\{\Theta_\omega \in B(\K,\K_\omega):\omega \in \Omega \}$ be two continuous $g$-frames. $\Lambda$ and $\Theta$ are
%Strongly disjoint if and only if for any $f\in\h,g\in \K$, $Re\langle S_{\Theta\Lambda}f,g \rangle=0.$
%\end{prop}
%%%%%%%%%%%%%%%%%%%%%%%%%%%%%%%%%%%%%%%%%%%%%%%%%%%%%%%%%%%%%%%%%%%%%%%%%%%%
%\begin{proof}
%For any $f\in\h,g\in \K$, we have
%\begin{eqnarray*}
%\langle S_{\Theta\Lambda}f,g \rangle=\langle T^*_\Lambda f,T^*_\Theta g \rangle,
%\end{eqnarray*}
%and
%\begin{eqnarray*}
%Im\langle S_{\Theta\Lambda}f,g \rangle=-Re\langle S_{\Theta\Lambda}(if),g \rangle.
%\end{eqnarray*}
%Then the proof is completed.
%\end{proof}
%%%%%%%%%%%%%%%%%%%%%%%%%%%%%%%%%%%%%%%%%%%%%
%%%%%%%%%%%%%%%%%%%%%%%%%%%%%%%%%%%%%%%%%%%%%
%%%%%%%%%%%%%%%%%%%%%%%%%%%%%%%%%%%%%%%%%%%%%
%%%%%%%%%%%%%%%%%%%%%%%%%%%%%%%%%%%%%%%%%%%%%
\section{{\textbf Some results related to Riesz-type continuous $g$-frames}}
In this section by generalizing some results of \cite{aaa}, we get some equivalet conditions for Riesz-type continuous $g$-frames.
%%%%%%%%%%%%%%%%%%%%%%%%%%%%%%%%%%%%%%%%%%%%%%%%%%%%%%%%%%%%%%%%%%%%%%%%%%%%
%Since
%\begin{eqnarray*}
%\int_\Omega \langle f,\Lambda^*_\omega \Theta_\omega S^{-1}_{\Theta\Lambda}g \rangle d\mu(\omega)=\langle S^{-1}_{\Theta\Lambda}S_{\Theta\Lambda}f,g \rangle=\langle f,g\rangle.
%\end{eqnarray*}
%Then $\{\Theta_\omega S^{-1}_{\Theta\Lambda}\in B(\h, \K_\omega):\omega\in \Omega\}$ is dual of $\Lambda$. Also, $\{\Lambda_\omega S^{-1}_{\Lambda\Theta}\in B(\h, \K_\omega):\omega\in \Omega\}$ is dual of $\Theta$.
%%%%%%%%%%%%%%%%%%%%%%%%%%%%%%%%%%%%%%%%%%%%%%%%%%%%%%%%%%%%%%%%%%%%%%%%%%%%
%\par Let $\Lambda=\{\Lambda_\omega\in B(\h,\K_\omega):\omega \in\Omega\}$ and
%$\Theta=\{\Theta_\omega\in B(\h,\K_\omega):\omega \in\Omega\}$ be two continuous
%g-Bessels. We introduce the operator $L:\h\rightarrow \h$ by
%\begin{eqnarray*}
%\langle {Lf},{g} \rangle=\int_\Omega \langle
%{\Theta_\omega^*\Lambda_\omega f},{g} \rangle d\mu(\omega) \quad ,
%g\in\h.
%\end{eqnarray*}
%%%%%%%%%%%%%%%%%%%%%%%%%%%%%%%%%%%%%%%%%%%%%%%%%%%%%%
\begin{thm}\label{thm9}
Let $\Lambda=\{\Lambda_{\omega}\in B(\h,\K_\omega):\omega\in \Omega\}$ be a
continuous $g$-frame. Then the following are equivalent:
\begin{enumerate}
\item[(i)] ${\Lambda}$  is a Riesz-type continuous g-frame.
\item[(ii)] There exist
constants $A,B>0$ such that
\begin{equation}\label{gframe1}
A\|\phi\|^{2}\leq \|T_{\Lambda}\phi\|^{2}\ \leq
B\|\phi\|^{2}, \quad \phi\in
\widehat \K.
\end{equation}
\item[(iii)] If
\begin{eqnarray*}\label{gframe2}
\int_{\Omega}\langle \Lambda_{\omega} ^ {*} \phi (\omega),f
\rangle d\mu(\omega)= 0
\end{eqnarray*}
for some $\phi\in
\widehat \K$ and for
any $f\in \h$, then $\phi=0$.
\end{enumerate}
\end{thm}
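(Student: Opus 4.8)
The plan is to prove the cycle of implications $(i)\Rightarrow(ii)\Rightarrow(iii)\Rightarrow(i)$, leaning on Theorem \ref{riezs}, which already characterizes the Riesz-type property by the condition $\mathrm{Range}\,T_\Lambda^*=\widehat{\K}$. The whole argument is really an exercise in translating between three equivalent ways of saying that $T_\Lambda^*$ has closed full range, equivalently that $T_\Lambda$ is bounded below (injective with closed range), equivalently that $T_\Lambda$ has trivial kernel after pairing against all $f\in\h$.

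For $(i)\Rightarrow(ii)$, I would start from $\mathrm{Range}\,T_\Lambda^*=\widehat{\K}$. Since $\Lambda$ is a continuous $g$-frame, Theorem \ref{ctf} gives that $T_\Lambda:\widehat{\K}\to\h$ is bounded and onto; in particular $T_\Lambda^*$ is bounded below, so there is $A>0$ with $\|T_\Lambda^*f\|^2\geq A\|f\|^2$ for all $f\in\h$. The upper bound in (ii) is immediate from boundedness of $T_\Lambda$ with $B=\|T_\Lambda\|^2$. The substance is the lower bound for $T_\Lambda$ on all of $\widehat{\K}$: because $\mathrm{Range}\,T_\Lambda^*=\widehat{\K}$ and $T_\Lambda^*$ is injective (its left inverse being essentially $T_\Lambda$ composed with the inverse of the frame operator), the map $T_\Lambda^*$ is a bounded bijection of $\h$ onto $\widehat{\K}$, hence invertible by the open mapping theorem, and then $T_\Lambda$, being the adjoint of an invertible operator, is itself invertible and therefore bounded below. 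This gives the constant $A$ in (\ref{gframe1}).

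For $(ii)\Rightarrow(iii)$, suppose $\phi\in\widehat{\K}$ satisfies $\int_\Omega\langle\Lambda_\omega^*\phi(\omega),f\rangle\,d\mu(\omega)=0$ for every $f\in\h$. By the definition (\ref{ti}) of the synthesis operator this says exactly $\langle T_\Lambda\phi,f\rangle=0$ for all $f\in\h$, hence $T_\Lambda\phi=0$. The lower bound in (ii), applied to $\phi$, then forces $\|\phi\|^2\leq A^{-1}\|T_\Lambda\phi\|^2=0$, so $\phi=0$. For $(iii)\Rightarrow(i)$, I would argue that (iii) says precisely $\ker T_\Lambda=\{0\}$, i.e. $T_\Lambda$ is injective. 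Since $\Lambda$ is a continuous $g$-frame, $T_\Lambda$ is already bounded and surjective by Theorem \ref{ctf}, so injectivity makes $T_\Lambda$ a bounded bijection, hence invertible; consequently $T_\Lambda^*$ is invertible and its range is all of $\widehat{\K}$, and Theorem \ref{riezs} yields that $\Lambda$ is a Riesz-type continuous $g$-frame.

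The main obstacle, and the place to be careful, is establishing the lower frame-type bound in (ii) from (i): one must correctly invoke the open mapping theorem to pass from ``$T_\Lambda^*$ is a bounded injection with dense (in fact full) range'' to ``$T_\Lambda^*$ is boundedly invertible.'' The injectivity of $T_\Lambda^*$ needs justification—it follows because $\Lambda$ being a continuous $g$-frame guarantees $\int_\Omega\|\Lambda_\omega f\|^2\,d\mu(\omega)=\|T_\Lambda^*f\|^2\geq A_\Lambda\|f\|^2$, so $T_\Lambda^*$ is already bounded below by the frame inequality (\ref{cgframe}); thus the only genuinely new input from (i) is the surjectivity of $T_\Lambda^*$ onto $\widehat{\K}$, which upgrades the bijection to an invertible operator and delivers the lower bound $A\|\phi\|^2\leq\|T_\Lambda\phi\|^2$ for the adjoint on every $\phi\in\widehat{\K}$. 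Everything else is bookkeeping with the identities in Theorem \ref{ctf} and Theorem \ref{riezs}.
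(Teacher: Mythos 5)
Your proposal is correct, and it follows the same cyclic scheme $(i)\Rightarrow(ii)\Rightarrow(iii)\Rightarrow(i)$ as the paper; indeed your arguments for $(ii)\Rightarrow(iii)$ and $(iii)\Rightarrow(i)$ coincide with the paper's almost word for word. The one genuine difference is in $(i)\Rightarrow(ii)$. The paper proves the lower bound by an explicit computation: writing $\phi=T_\Lambda^*f$ (possible by Theorem \ref{riezs}), it observes that $\|\phi\|^2=\int_\Omega\|\Lambda_\omega f\|^2\,d\mu(\omega)=\langle S_\Lambda f,f\rangle$, applies Cauchy--Schwarz and the lower frame bound to get
\begin{equation*}
\|\phi\|^4=\abs{\langle S_\Lambda f,f\rangle}^2\leq\|S_\Lambda f\|^2\|f\|^2\leq\frac{1}{A_\Lambda}\|S_\Lambda f\|^2\|\phi\|^2,
\end{equation*}
and concludes $A_\Lambda\|\phi\|^2\leq\|S_\Lambda f\|^2=\|T_\Lambda T_\Lambda^*f\|^2=\|T_\Lambda\phi\|^2$, yielding the explicit constant $A=A_\Lambda$. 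You instead argue abstractly: $T_\Lambda^*$ is bounded below by the frame inequality and onto $\widehat\K$ by (i), hence boundedly invertible, so its adjoint $T_\Lambda$ is invertible and therefore bounded below. This is perfectly valid, and arguably more conceptual, but it only produces the inexplicit constant $A=\|T_\Lambda^{-1}\|^{-2}$ rather than the sharp-looking $A_\Lambda$; also, a small simplification is available to you: once $T_\Lambda^*$ is bounded below and surjective, its inverse is automatically bounded (with $\|(T_\Lambda^*)^{-1}\|\leq A_\Lambda^{-1/2}$), so the appeal to the open mapping theorem, while harmless, is not needed.
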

%%%%%%%%%%%%%%%%%%%%%%%%%%%%%%%%%%%%%%%%%%%%%%%%%%%%%%%
\begin{proof}
$(i) \Rightarrow (ii) $ By Proposition \ref{combi},
it remains to prove the left-hand inequality in (\ref{gframe1}). By Theorem
\ref{riezs}, for any $\phi\in
\widehat \K$, there exist
$f\in \h$ such that $T^{*}_{\Lambda}f=\phi$. Then
\begin{align*}
\|\phi\|^{4}=\Big(\int_{\Omega}\|\Lambda_\omega(f)\|^{2}d\mu(\omega)\Big)^2
&=|{\langle S_{\Lambda} f,f \rangle}|^{2}
\\&\leq \|S_{\Lambda} f\|^{2} \| f\|^{2}
\\&\leq \frac {1}{A_\Lambda} \|S_{\Lambda} f\|^{2}\int_{\Omega}\|\Lambda_\omega f\|^{2}d\mu(\omega),
\end{align*}
and hence
\begin{align*}
A_\Lambda\|\phi\|^{2} \leq \|S_{\Lambda} f\|^{2}=\|T_{\Lambda}
T^{*}_{\Lambda}f\|^{2}=\|T_{\Lambda} \phi\|^{2}.
\end{align*}
$(ii) \Rightarrow (iii) $ Let  for some $\phi\in \widehat \K$ and any $f\in \h,$ we have
\begin{eqnarray*}
\langle T_\Lambda\phi,f \rangle=\int_{\omega}\langle \Lambda_\omega^* \phi(\omega),f\rangle d\mu(\omega)=0.
\end{eqnarray*}
 Then $T_\Lambda \phi=0$ and by inequality (\ref{gframe1}), $\phi=0$.
 \\$(iii) \Rightarrow (i)$ Since $\Lambda$ is a continuous $g$-frame, $T_{\Lambda}$ is onto and by (iii) $T_{\Lambda}$ is one to one, so $T_{\Lambda}$ is invertible. Consequently, $ T_{\Lambda} ^{*}$ is invertible. Therefore, by Theorem \ref{riezs}, the proof is completed.
\end{proof}
%%%%%%%%%%%%%%%%%%%%%%%%%%%%%%%%%%%%%%%%%%%%%%%%%%%%%%%%%%%%%%%%%%%%%%%%%%%%
Let $\Lambda=\{\Lambda_\omega \in B(\h, \K_{\omega}) : \omega \in \Omega \}$ and $\Theta=\{\Theta_\omega \in B(\K,\K_\omega):\omega \in \Omega \}$ be two continuous $g$-Bessel families. Consider the well defined operator $S_{\Theta\Lambda}:\h\rightarrow\K$, $S_{\Theta\Lambda}=T_\Theta T^*_\Lambda.$ Then
\begin{eqnarray*}
\langle S_{\Theta\Lambda}f,g \rangle=\int_\Omega \langle \Lambda_\omega f, \Theta_\omega g \rangle d\mu(\omega),\quad  f\in\h,g\in\K,
\end{eqnarray*}
and $S^*_{\Theta\Lambda}=S_{\Lambda\Theta}.$
%%%%%%%%%%%%%%%%%%%%%%%%%%%%%%%%%%%%%%%%%%%%%%%%%%%%%%%%%%%%%%%%%%%%%%%%%%%%%%%%%%%%%%%%%%%%%%%%%%%%%%%%
\begin{thm}
Let $\Lambda=\{\Lambda_{\omega}\in B(\h,\K_\omega):\omega\in \Omega\}$ and
$\Theta=\{\Theta_{\omega}\in B(\h,\K_\omega):\omega\in\Omega\}$ be two continuous
g-Bessel family such that $S_{\Lambda\Theta}=I_\h.$
Assume that $L_{1},L_{2}:\h\rightarrow \h$ are bounded linear
operators so that $L_{1}^{*}L_{2}=I$. Then the following
statements are equivalent:
\\(i) $\Gamma=\{\Lambda_{\omega}L_{1}+\Theta_{\omega}L_{2}\in B(\h,\K_\omega):\omega\in\Omega\}$ is a Riesz-type continuous g-frame.
\\(ii) The operator $T_{\Lambda}^{*}L_{1}+T_{\Theta}^{*}L_{2}$ is
surjective.
\\(iii) There exists a constant $M>0$ such that
\begin{eqnarray*}
M\|\phi\|^{2}\leq \|(L_{1}^{*} T_{\Lambda}+L_{2}^{*}
T_{\Theta})\phi\|^{2},\quad\phi\in
\widehat \K.
\end{eqnarray*}
\end{thm}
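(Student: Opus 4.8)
The plan is to reduce the whole statement to the synthesis and analysis operators of $\Gamma$ and then invoke Theorems \ref{riezs} and \ref{thm9}. First I would identify these operators. By Proposition \ref{combi}, $(T_\Gamma^* h)(\omega)=\Gamma_\omega h=\Lambda_\omega L_1 h+\Theta_\omega L_2 h$ for all $\omega$, so
\begin{equation*}
T_\Gamma^*=T_\Lambda^* L_1+T_\Theta^* L_2,\qquad T_\Gamma=L_1^* T_\Lambda+L_2^* T_\Theta .
\end{equation*}
Thus the operator in (ii) is exactly $T_\Gamma^*$ and the operator in (iii) is exactly $T_\Gamma$, so the theorem is really a statement about the single family $\Gamma$.

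The crucial preliminary step, which I expect to carry the real content, is to show that $\Gamma$ is a continuous $g$-frame. Being a combination of continuous $g$-Bessel families composed with bounded operators, $\Gamma$ is automatically Bessel, so the only issue is the lower bound. Here I would compute $T_\Gamma T_\Gamma^*$ directly:
\begin{align*}
T_\Gamma T_\Gamma^* &=(L_1^* T_\Lambda+L_2^* T_\Theta)(T_\Lambda^* L_1+T_\Theta^* L_2)\\
&=L_1^*(T_\Lambda T_\Lambda^*)L_1+L_1^*(T_\Lambda T_\Theta^*)L_2+L_2^*(T_\Theta T_\Lambda^*)L_1+L_2^*(T_\Theta T_\Theta^*)L_2 .
\end{align*}
Now $T_\Lambda T_\Theta^*=S_{\Lambda\Theta}=I_\h$, and taking adjoints $T_\Theta T_\Lambda^*=S_{\Theta\Lambda}=S_{\Lambda\Theta}^*=I_\h$; combined with $L_1^* L_2=I$ and $L_2^* L_1=(L_1^* L_2)^*=I$, the two cross terms each equal $I$. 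Since $T_\Lambda T_\Lambda^*$ and $T_\Theta T_\Theta^*$ are positive, this yields
\begin{equation*}
T_\Gamma T_\Gamma^*=L_1^*(T_\Lambda T_\Lambda^*)L_1+L_2^*(T_\Theta T_\Theta^*)L_2+2I\ \geq\ 2I ,
\end{equation*}
so that $\int_\Omega\|\Gamma_\omega h\|^2\,d\mu(\omega)=\|T_\Gamma^* h\|^2=\langle T_\Gamma T_\Gamma^* h,h\rangle\geq 2\|h\|^2$. Hence $\Gamma$ is a continuous $g$-frame. This is exactly where both hypotheses $S_{\Lambda\Theta}=I_\h$ and $L_1^* L_2=I$ are used, and establishing this lower bound is the main obstacle; everything after it is formal bookkeeping.

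With $\Gamma$ now known to be a continuous $g$-frame, the three equivalences follow from the general theory already developed. For (i) $\Leftrightarrow$ (ii), Theorem \ref{riezs} says that $\Gamma$ is Riesz-type if and only if $RangeT_\Gamma^*=\widehat{\K}$, i.e.\ if and only if $T_\Gamma^*=T_\Lambda^* L_1+T_\Theta^* L_2$ is surjective, which is precisely (ii). For (i) $\Leftrightarrow$ (iii), I would apply Theorem \ref{thm9} to $\Gamma$: it is Riesz-type if and only if there are constants $A,B>0$ with $A\|\phi\|^2\leq\|T_\Gamma\phi\|^2\leq B\|\phi\|^2$ for all $\phi\in\widehat{\K}$. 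The upper estimate holds automatically with $B=\|T_\Gamma\|^2$ because $T_\Gamma$ is bounded, so the condition collapses to the single lower inequality $M\|\phi\|^2\leq\|(L_1^* T_\Lambda+L_2^* T_\Theta)\phi\|^2$, which is exactly (iii). This closes the equivalences.
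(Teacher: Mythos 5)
Your proposal is correct and follows essentially the same route as the paper: both establish that $\Gamma$ is a continuous $g$-frame with lower bound $2$ by expanding the cross terms via $S_{\Lambda\Theta}=S_{\Theta\Lambda}=I_\h$ and $L_1^*L_2=L_2^*L_1=I$, identify $T_\Gamma^*=T_\Lambda^*L_1+T_\Theta^*L_2$, and then invoke Theorem \ref{riezs} for (i)$\Leftrightarrow$(ii) and Theorem \ref{thm9} for (i)$\Leftrightarrow$(iii). The only cosmetic difference is that you phrase the frame-bound computation as the operator inequality $T_\Gamma T_\Gamma^*\geq 2I$, while the paper expands $\int_\Omega\|(\Lambda_\omega L_1+\Theta_\omega L_2)f\|^2\,d\mu(\omega)$ pointwise; these are the same calculation.
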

%%%%%%%%%%%%%%%%%%%%%%%%%%%%%%%%%%%%%%%%%%%%%%%%%%%%%%%%%%%%%%%%%%%%%%%%%%%%
\begin{proof}
For any $f\in \h$ we have
\begin{align*}
\int_{\Omega}\|(\Lambda_\omega L_1&+\Theta_\omega L_2)f\| ^{2}
d\mu(\omega)
\\&=\int_{\Omega}\|\Lambda_\omega L_1 f\|^{2}
d\mu(\omega)+\int\big\langle{\Lambda_\omega L_1 f},{\Theta_\omega L_2 f}\big\rangle d\mu(\omega)
\\&+\int\big\langle {\Theta_\omega L_2 f},{\Lambda_\omega L_1 f}\big\rangle
d\mu(\omega)+\int_{\Omega}\|\Theta_\omega L_2 f\|^{2} d\mu(\omega)
\\&=\int_{\Omega}\|\Lambda_\omega L_1 f\|^{2}
d\mu(\omega)+2\|f\|^2+\int_{\Omega}\|\Theta_\omega L_2 f\|^{2}d\mu(\omega).
\end{align*}
So
\begin{align*}
 2\|f \|^2& \leq \int_{\Omega}\|(\Lambda_\omega
L_1+\Theta_\omega L_2)f\|^{2} d\mu(\omega) 
\\&\leq \big(B_\Lambda\|L_1\|^2+2+ B_\Theta \|L_2\|^2\big)\|f \|^2.
\end{align*}
Hence $\Gamma$ is a continuous $g$-frame. On the other hand
\begin{align*}
T_\Gamma^*=T_\Lambda^* L_1 +T_\Theta^* L_2.
\end{align*}
By Theorem \ref{riezs}, (i) and (ii) are equivalent.
\\$(i)\Leftrightarrow (iii)$ It is concluded by Theorem \ref{thm9} and $T_\Gamma= L_1^* T_\Lambda+L_2^* T_\Theta.$
\end{proof}
%%%%%%%%%%%%%%%%%%%%%%%%%%%%%%%%%%%%%%%%%%%%%%%%%%%%%%%%%%%%%%%%%%%%%%%%%%
\begin{prop}
Let $(\Omega,\mu)$ be a measure space, that $\mu$ is $\sigma$-finite and $\Lambda=\{\Lambda_\omega\in B(\h, \K_\omega):\omega \in \Omega\}$ is a continuous g-frame. Suppose that
$\Theta=\{\Theta_\omega\in B(\K, \K_\omega):\omega \in \Omega\}$ is a continuous
g-Bessel family. If $S_{\Theta\Lambda}$ is surjective, then
$\Theta$ is a continuous g-frame. If $\Theta$ is a continuous $g$-frame and $\Lambda$ is a
Riesz-type continuous g-frame then $S_{\Theta\Lambda}$ is surjective.
\end{prop}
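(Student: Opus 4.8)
The statement splits into two implications, and the plan is to handle each by passing to the bounded operators $T_\Lambda$, $T_\Theta$ together with the relation $S_{\Theta\Lambda}=T_\Theta T_\Lambda^{*}$ and the adjoint identity $S_{\Theta\Lambda}^{*}=S_{\Lambda\Theta}=T_\Lambda T_\Theta^{*}$ recorded just before the statement. Throughout I would use the fact from Proposition \ref{combi} that $\|T_\Theta^{*}g\|^{2}=\int_\Omega\|\Theta_\omega g\|^{2}\,d\mu(\omega)$ for all $g\in\K$.

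For the first implication, the plan is to invoke the standard Hilbert-space duality that a bounded operator between Hilbert spaces is surjective if and only if its adjoint is bounded below. Since $S_{\Theta\Lambda}:\h\to\K$ is assumed surjective, its adjoint $S_{\Lambda\Theta}=T_\Lambda T_\Theta^{*}:\K\to\h$ is bounded below, so there is a constant $c>0$ with $\|S_{\Lambda\Theta}g\|\geq c\|g\|$ for all $g\in\K$. The key estimate is then
$$c\|g\|\leq\|T_\Lambda T_\Theta^{*}g\|\leq\|T_\Lambda\|\,\|T_\Theta^{*}g\|,$$
which, after squaring and using the identity above, yields the lower bound
$$\int_\Omega\|\Theta_\omega g\|^{2}\,d\mu(\omega)\geq\frac{c^{2}}{\|T_\Lambda\|^{2}}\|g\|^{2},\quad g\in\K.$$
Since $\Theta$ is already a continuous $g$-Bessel family, the upper bound is in hand, and hence $\Theta$ is a continuous $g$-frame with lower bound $c^{2}/\|T_\Lambda\|^{2}$ (here $\|T_\Lambda\|\neq0$ because $\Lambda$ is a continuous $g$-frame on a nonzero space, the degenerate case being trivial).

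For the second implication, the argument is a direct composition of surjections. Since $\Lambda$ is a Riesz-type continuous $g$-frame, Theorem \ref{riezs} gives $\mathrm{Range}\,T_\Lambda^{*}=\widehat{\K}$, that is, $T_\Lambda^{*}:\h\to\widehat{\K}$ is onto. Since $\Theta$ is a continuous $g$-frame and $\mu$ is $\sigma$-finite, Theorem \ref{ctf} gives that $T_\Theta:\widehat{\K}\to\K$ is onto. Consequently the range of $S_{\Theta\Lambda}=T_\Theta T_\Lambda^{*}$ equals $T_\Theta(\mathrm{Range}\,T_\Lambda^{*})=T_\Theta(\widehat{\K})=\K$, so $S_{\Theta\Lambda}$ is surjective.

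The only nonroutine ingredient is the surjective/bounded-below duality invoked in the first part, and I would apply it to the adjoint pair $(S_{\Theta\Lambda},S_{\Lambda\Theta})$; everything else reduces to the elementary norm estimate above and to the characterizations already available in Theorems \ref{riezs} and \ref{ctf}. I do not expect a genuine obstacle: once the passage to the adjoint $S_{\Lambda\Theta}$ is made, the frame inequalities for $\Theta$ follow immediately, and the second implication is purely a statement about composing onto maps.
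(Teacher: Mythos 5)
Your proposal is correct, and its second half is exactly the paper's argument: $T_\Lambda^{*}$ is onto $\widehat{\K}$ by Theorem \ref{riezs}, $T_\Theta$ is onto $\K$ by Theorem \ref{ctf}, and $S_{\Theta\Lambda}=T_\Theta T_\Lambda^{*}$ is a composition of surjections. For the first half, however, you take a genuinely different route. The paper stays on the synthesis side: since $\mathrm{Range}\,S_{\Theta\Lambda}\subseteq\mathrm{Range}\,T_\Theta$, surjectivity of $S_{\Theta\Lambda}$ forces $T_\Theta$ to be onto, and then Theorem \ref{ctf} (bounded and onto implies continuous $g$-frame) finishes the argument in two lines. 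You instead pass to the adjoint $S_{\Lambda\Theta}=T_\Lambda T_\Theta^{*}$, invoke the surjective/bounded-below duality to get $\|S_{\Lambda\Theta}g\|\geq c\|g\|$, and extract the explicit lower frame bound
\[
\int_\Omega\|\Theta_\omega g\|^{2}\,d\mu(\omega)=\|T_\Theta^{*}g\|^{2}\geq\frac{c^{2}}{\|T_\Lambda\|^{2}}\|g\|^{2},\quad g\in\K,
\]
with the Bessel hypothesis supplying the upper bound. Both arguments are sound and are essentially adjoint to one another (the paper's appeal to Theorem \ref{ctf} hides the same bounded-below fact for the analysis operator inside that theorem's proof). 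What your route buys: a quantitative lower frame bound $c^{2}/\|T_\Lambda\|^{2}$, and independence from the $\sigma$-finiteness hypothesis in this direction, since you verify the defining frame inequality directly from Proposition \ref{combi} rather than through Theorem \ref{ctf}, whose statement assumes $\mu$ is $\sigma$-finite. What the paper's route buys: brevity, with no norm estimates at all.
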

%%%%%%%%%%%%%%%%%%%%%%%%%%%%%%%%%%%%%%%%%%%%%%%%%%%%%%%%%%%%%%%%%%%%%%%%%%%%
\begin{proof}
Since $S_{\Theta\Lambda}$ is surjective,
it follows that $T_\Theta$ is surjective. On the other hand, by Proposition \ref{combi}, $T_\Theta$ is bounded. Hence by
Theorem \ref{ctf}, $\Theta$ is a continuous g-frame.
\\If $\Lambda$ is a Riesz-type continuous $g$-frame and $\Theta$ is a continuous $g$-frame then by Theorems \ref{riezs} and \ref{ctf}, $T_\Lambda^*$ and $T_\Theta$ are surjective. So, $S_{\Theta\Lambda}$ is surjective.
\end{proof}
%%%%%%%%%%%%%%%%%%%%%%%%%%%%%%%%%%%%%%%%%%%%%%%%%%%%%%%%%%%%%%%%%%%%%%%%%%%%
\begin{thm}
Let $\Lambda=\{\Lambda_\omega\in B(\h, \K_\omega): \omega\in\Omega\}$ be a continuous
g-frame and $\Theta=\{\Theta_\omega \in B(\K, \K_\omega): \omega\in\Omega\}$ be a continuous g-Bessel family. Suppose that there exists a number
$\lambda$ with $0<\lambda<A_\Lambda$ such that
\begin{equation}
\|S_{\Theta\Lambda} f-S_\Lambda f\|\leq\lambda\|f\|,\quad f\in \h.
\end{equation}
Then $\Lambda$ is a Riesz-type continuous g-frame if and only if $\Theta$
is a Riesz-type continuous g-frame.
\end{thm}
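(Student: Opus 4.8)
The plan is to reduce the whole statement to the invertibility of the cross operator $S_{\Theta\Lambda}=T_\Theta T_\Lambda^*$, and then to read off the Riesz-type property from invertibility of the synthesis operators via the factorization $S_{\Theta\Lambda}=T_\Theta T_\Lambda^*$ together with Theorem \ref{riezs} and Theorem \ref{thm9}. (Note that the difference $S_{\Theta\Lambda}f-S_\Lambda f$ is only meaningful when the ranges of $S_{\Theta\Lambda}$ and $S_\Lambda$ lie in the same space, i.e. when $\K=\h$, which I take to be the intended setting.)

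First I would show that the perturbation hypothesis forces $S_{\Theta\Lambda}$ to be invertible. Since $A_\Lambda I\le S_\Lambda\le B_\Lambda I$ we have $\|S_\Lambda^{-1}\|\le 1/A_\Lambda$, and the bound $\|S_{\Theta\Lambda}-S_\Lambda\|\le\lambda<A_\Lambda$ gives $\|S_\Lambda^{-1}(S_\Lambda-S_{\Theta\Lambda})\|\le\lambda/A_\Lambda<1$. Hence $S_\Lambda^{-1}S_{\Theta\Lambda}=I-S_\Lambda^{-1}(S_\Lambda-S_{\Theta\Lambda})$ is invertible by the Neumann series, and therefore $S_{\Theta\Lambda}$ is invertible. (Equivalently, $\|S_{\Theta\Lambda}f\|\ge(A_\Lambda-\lambda)\|f\|$, and the same estimate for the adjoint $S_{\Theta\Lambda}^*=S_{\Lambda\Theta}$ yields invertibility.) In particular $S_{\Theta\Lambda}=T_\Theta T_\Lambda^*$ is surjective, so $T_\Theta$ is surjective and bounded, whence by Theorem \ref{ctf} the family $\Theta$ is itself a continuous $g$-frame. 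This step is what makes both Theorem \ref{riezs} and Theorem \ref{thm9} applicable to $\Theta$ as well as to $\Lambda$.

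Next I would recall that, for a continuous $g$-frame, being Riesz-type is equivalent to invertibility of the synthesis operator: a frame always has $T_\Lambda$ onto and $T_\Lambda^*$ bounded below, and the Riesz-type condition $Range\,T_\Lambda^*=\widehat{\K}$ of Theorem \ref{riezs} upgrades this to bijectivity of $T_\Lambda^*$ (equivalently of $T_\Lambda$). With both $\Lambda$ and $\Theta$ now known to be frames, the equivalence follows from $S_{\Theta\Lambda}=T_\Theta T_\Lambda^*$ with $S_{\Theta\Lambda}$ invertible: if $\Lambda$ is Riesz-type then $T_\Lambda^*$ is invertible, so $T_\Theta=S_{\Theta\Lambda}(T_\Lambda^*)^{-1}$ is a composition of invertible operators and hence invertible, giving $Range\,T_\Theta^*=\widehat{\K}$, i.e. $\Theta$ is Riesz-type by Theorem \ref{riezs}; conversely, if $\Theta$ is Riesz-type then $T_\Theta$ is invertible and $T_\Lambda^*=T_\Theta^{-1}S_{\Theta\Lambda}$ is invertible, so $\Lambda$ is Riesz-type. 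The argument is symmetric in $\Lambda$ and $\Theta$.

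The main obstacle is the first step: one must extract genuine invertibility of $S_{\Theta\Lambda}$ (not merely injectivity or dense range) from the norm perturbation bound, and it is precisely the strict inequality $\lambda<A_\Lambda$ that drives the Neumann-series estimate $\lambda/A_\Lambda<1$. Once $S_{\Theta\Lambda}$ is invertible and $\Theta$ is recognized as a frame, the rest is bookkeeping with compositions of invertible operators and applications of Theorem \ref{riezs}.
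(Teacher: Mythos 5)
Your proposal is correct and takes essentially the same route as the paper's proof: the perturbation bound forces the cross operator $S_{\Theta\Lambda}$ (equivalently its adjoint $S_{\Lambda\Theta}$) to be invertible, and then invertibility of $T_\Lambda^*$ transfers to $T_\Theta$ through the factorization $S_{\Theta\Lambda}=T_\Theta T_\Lambda^*$, with Theorem \ref{riezs} converting this into the Riesz-type property. Your two deviations are refinements of detail rather than a different method: you obtain invertibility via a Neumann series (the paper instead bounds $S_{\Theta\Lambda}$ and $S_{\Lambda\Theta}$ below and uses $Range\, S_{\Theta\Lambda}=\ker(S_{\Lambda\Theta})^{\perp}$), and you explicitly verify that surjectivity of $T_\Theta$ makes $\Theta$ a continuous $g$-frame---a step the paper omits but which is needed before Theorem \ref{riezs}, and indeed the very notion of Riesz-type, can be applied to $\Theta$.
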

%%%%%%%%%%%%%%%%%%%%%%%%%%%%%%%%%%%%%%%%%%%%%%%%%%%%%%%%%%%%%%%%%%%%%%%%%%%%
\begin{proof}
 For all $f\in \h$ we have
\begin{align*}
 \|S_{\Theta\Lambda}f\|=\|S_{\Theta\Lambda}f-S_\Lambda
f+S_\Lambda f\|&\geq\|S_\Lambda f\|-\|S_{\Theta\Lambda}f-S_\Lambda f\|
\\
&\geq(A_\Lambda-\lambda)\|f\|.
\end{align*}
Then $S_{\Theta\Lambda}$ is injective with closed range. On the other hand,
\begin{eqnarray*}
\|S_{\Lambda\Theta}f-S_\Lambda f\| \leq\|(S_{\Theta\Lambda}-S_\Lambda)^*\|\|f\|\leq\lambda\|f\|.
\end{eqnarray*}
 So $S_{\Lambda\Theta}$ is also injective with closed range.Therefore
\begin{eqnarray*}
Range S_{\Theta\Lambda}=ker(S_{\Lambda\Theta})^\bot=\h
\end{eqnarray*}
and $Range S_{\Lambda\Theta}=\h.$
Thus, $S_{\Theta\Lambda}$ and $S_{\Lambda\Theta}$ are
invertible. Hence, $T_\Lambda^*$ is invertible if and only if
$T_\Theta^*$ is invertible. Then by Theorem \ref{riezs} the proof is completed.
\end{proof}
%%%%%%%%%%%%%%%%%%%%%%%%%%%%%%%%%%%%%%%%%%%%%%%%%%%%%%%%%%%%%%%%%%%%%%%%%%%%

$$\textbf{Acknowledgment:}$$

\end{document}